\documentclass[aos,preprint]{imsart}
\setattribute{journal}{name}{}

\def \tcred{\textcolor{black}}
\usepackage[round,colon,authoryear]{natbib}
\RequirePackage[OT1]{fontenc}
\RequirePackage{amsthm,amsmath,bm,natbib,graphicx,enumitem}
\RequirePackage[colorlinks,citecolor=blue,urlcolor=blue]{hyperref}
\RequirePackage{hypernat}
\usepackage{amssymb,tabularx,multicol,multirow,booktabs}
\usepackage[usenames,dvipsnames]{color}
\RequirePackage{xr}




\newtheorem{theorem}{Theorem}

\newtheorem{lemma}{Lemma}



\newtheorem{lem}[lemma]{Lemma}

\DeclareMathOperator{\sech}{sech}


\def \bx{\mathbf{x}}
\def \by{\mathbf{y}}
\def \bQ{\mathbf{Q}}
\def \bX{\mathbf{X}}

\def \be{\begin{equs}}
\def \ee{\end{equs}}

\def \E{\mathbb{E}}
\def \P{\mathbb{P}}

\def \sumn{\sum_{i=1}^n}
\def \bmu{\pmb{\mu}}
\def \R{\mathbb{R}}

\def \Ptheta {\P_{\theta,\bmu}}
\def \Etheta {\E_{\theta,\bmu}}

\def \Pzero {\P_{\theta,\mathbf{0}}}
\def \Ezero {\E_{\theta,\mathbf{0}}}

\def \PJ {\P_{\mathbf{Q},\bmu}}

\def \EJ {\E_{\mathbf{Q},\bmu}}

\begin{document}

\begin{frontmatter}
	\title{Global Testing Against Sparse Alternatives under Ising Models}
	\runtitle{Detection Thresholds for Ising Models}
		\thankstext{m2}{The research of Sumit Mukherjee was supported in part by NSF Grant DMS-1712037.}
	\thankstext{m4}{The research of Ming Yuan was supported in part by NSF FRG Grant DMS-1265202, and NIH Grant 1-U54AI117924-01.}

	\begin{aug}
		
		\author{\fnms{Rajarshi} \snm{Mukherjee}\thanksref{m1}\ead[label=e2]{rmukherj@berkeley.edu}},
		\author{\fnms{Sumit} \snm{Mukherjee}\thanksref{m2}\ead[label=e1]{sm3949@columbia.edu}},
		\and
		\author{\fnms{Ming} \snm{Yuan}\thanksref{m4}\ead[label=e3]{ming.yuan@columbia.edu}}

		\affiliation{University of California, Berkeley\thanksmark{m1} and Columbia University \thanksmark{m2}\thanksmark{m4} }
		
		\address{Division of Biostatistics,\\
		Haviland Hall, Berkeley, CA- 94720. \\
			\printead{e2}}

		\address{Department of Statistics\\
			1255 Amsterdam Avenue\\
			New York, NY-10027. \\
			\printead{e1}\\
			\printead{e3}}
		

	\end{aug}

\begin{abstract} 
	In this paper, we study the effect of dependence on detecting sparse signals. In particular, we focus on global testing against sparse alternatives for the means of binary outcomes following an Ising model, and establish how the interplay between the strength and sparsity of a signal determines its detectability under various notions of dependence. The profound impact of dependence is best illustrated under the Curie-Weiss model where we observe the effect of a ``thermodynamic" phase transition. In particular, the critical state exhibits a subtle ``blessing of dependence'' phenomenon in that one can detect much weaker signals at criticality than otherwise. Furthermore, we develop a testing procedure that is broadly applicable to  account for dependence and show that it is asymptotically minimax optimal under fairly general regularity conditions.
\end{abstract}

\begin{keyword}[class=AMS]
	\kwd[Primary ]{62G10}
	\kwd{62G20}
	\kwd{62C20}
\end{keyword}
\begin{keyword}
	\kwd{Detection Boundary}
	\kwd{Ising Models}
	\kwd{Phase Transitions}
	\kwd{Sparse Signals}
\end{keyword}

\end{frontmatter}

%

\section{Introduction}

Motivated by applications in a multitude of scientific disciplines, statistical analysis of ``sparse signals" in a high dimensional setting, be it large-scale multiple testing or screening for relevant features, has drawn considerable attention in recent years. For more discussions on sparse signal detection type problems see, e.g., \cite{Jin1, arias2005near, arias2008searching, addario2010combinatorial, Jin2, Ingster5, cai2014rate, arias2015sparse, mukherjee2015hypothesis}, and references therein. A critical assumption often made in these studies is that the observations are independent. Recognizing the potential limitation of this assumption, several recent attempts have been made to understand the implications of dependence in both theory and methodology. See, e.g., \cite{hall2008properties, Jin2, Candes, wu2014detection, jin2014rare}. These earlier efforts, setting in the context of Gaussian sequence or regression models, show that it is important to account for dependence among observations, and under suitable conditions, doing so appropriately may lead to tests that are as powerful as if the observations were independent. However, it remains largely unknown how the dependence may affect our ability to detect sparse signals beyond Gaussian models. The main goal of the present work is to fill in this void. In particular, we investigate the effect of dependence on detection of sparse signals for Bernoulli sequences, a class of problems arising naturally in many genomics applications \citep[e.g.,][]{mukherjee2015hypothesis}.

Let $\bX=(X_1,\ldots,X_n)^\top\in \{\pm 1\}^n$ be a random vector such that $\P(X_i=+1)=p_i$. In a canonical multiple testing setup, we want to test collectively that $H_0: p_i=1/2$, $i=1,2,\ldots,n$. Of particular interest here is the setting when $X_i$'s may be dependent. A general framework to capture the dependence among a sequence of binary random variables is the so-called Ising models, which have been studied extensively in the literature \citep{ising1925beitrag, onsager1944crystal, Ellis_Newman, majewski2001ising, stauffer2008social, mezard2009information}. An Ising model specifies the joint distribution of $\bX$ as:
\begin{align}
\P_{\bQ,\bmu}(\bX=\bx):=\frac{1}{Z(\mathbf{Q}, \mathbf{\bmu})}\exp{\left(\frac{1}{2}\bx^\top\mathbf{Q} \bx+\bmu^\top\bx\right)},\qquad \forall \bx \in \{\pm 1\}^n,
\label{eqn:general_ising}
\end{align}
where $\mathbf{Q}$ is an $n \times n$ symmetric and hollow matrix, $\bmu:=(\mu_1,\ldots,\mu_n)^\top\in \mathbb{R}^n$, and $Z(\mathbf{Q}, \mathbf{\bmu})$ is a normalizing constant. Throughout the rest of the paper, the expectation operator corresponding to \eqref{eqn:general_ising} will be analogously denoted by $\EJ$. 
It is clear that the matrix $\mathbf{Q}$ characterizes the dependence among the coordinates of $\bX$, and $X_i$'s are independent if $\mathbf{Q}=\mathbf{0}$. Under model (\ref{eqn:general_ising}), the relevant null hypothesis can be expressed as $\bmu=\mathbf{0}$. More specifically, we are interested in testing it against a sparse alternative:
\begin{equation} 
H_0: \bmu=\mathbf{0} \quad {\rm vs} \quad H_1: \bmu \in \Xi(s,B), \label{eqn:hypo_sparse}
\end{equation}
 where
$${\Xi}(s,B):=\left\{\bmu\in \R^n:|{\rm supp}(\bmu)|= s,{\rm \ and\ } \min_{i\in {\rm supp}(\bmu)}\mu_i\geq B>0\right\},$$
and
$$
{\rm supp}(\bmu):=\{1\le i\le n:\mu_i\ne 0\}.
$$
Our goal here is to study the impact of $\mathbf{Q}$ in doing so.

To this end, we adopt an asymptotic minimax framework that can be traced back at least to \cite{burnashev1979minimax, ingster1994minimax, Ingster1}. See \cite{Ingster4} for further discussions. Let a statistical test for $H_0$ versus $H_1$ be a measurable $\{0,1\}$ valued function of the data $\bX$, with $1$ indicating rejecting the null hypothesis $H_0$ and $0$ otherwise. The worst case risk of a test $T: \{\pm 1\}^n\to \{0,1\}$ can be given by 
\begin{align} 
\mathrm{Risk}(T,{\Xi}(s,B),\bQ)&:=\P_{\bQ,\mathbf{0}}\left(T(\bX)=1\right)+\sup_{\bmu \in {\Xi}(s,B)}\P_{\bQ,\bmu}\left(T(\bX)=0\right),\label{eqn:general_hypo_ising}
\end{align}
where $\P_{\mathbf{Q},\bmu}$ denotes the probability measure as specified by \eqref{eqn:general_ising}. We say that a sequence of tests $T$ indexed by $n$ corresponding to a sequence of model-problem pair (\ref{eqn:general_ising}) and (\ref{eqn:general_hypo_ising}), to be asymptotically powerful (respectively asymptotically not powerful) against $\Xi(s,B)$ if
\begin{equation}
\label{eqn:powerful}
\limsup\limits_{n\rightarrow \infty}\mathrm{Risk}(T,{\Xi}(s,B),\bQ)= 0\text{ (respectively }\liminf\limits_{n\rightarrow \infty}\mathrm{Risk}(T,{\Xi}(s,B),\bQ)>0).
\end{equation}
The goal of the current paper is to characterize how the sparsity $s$ and strength $B$ of the signal $(\bmu)$ jointly determine if there is a powerful test, and how the behavior changes with $\bQ$. In particular,
\begin{enumerate}
\item[$\bullet$] for a general class of Ising models, we provide tests for detecting arbitrary sparse signals and show that they are asymptotically rate optimal for Ising models on regular graphs in the high temperature regime;
\item[$\bullet$] for Ising models on the cycle graph, we establish rate optimal results for all regimes of temperature, and show that the detection thresholds are the same as the independent case;  
\item[$\bullet$] for the Curie-Weiss model \citep{kac1969mathematical,nishimori2001statistical}, we provide sharp asymptotic detection thresholds for detecting arbitrarily sparse signals, which reveal an interesting phenomenon at the thermodynamic phase transition point of a Curie-Weiss magnet.
%
%
%
\end{enumerate} 
Our tools for analyzing the rate optimal tests depend on the method of exchangeable pairs \citep{chatterjee2007stein}, which might be of independent interest.

The rest of the paper is organized as follows. In Section \ref{section:curie_weiss} we study in detail the optimal detection thresholds for the Curie-Weiss model and explore the effects of the presence of a ``thermodynamic phase transition" in the model. Section \ref{section:main_results} is devoted to developing and analyzing testing procedures in the context of more general Ising models where we also show that under some conditions on $\bQ$, the proposed testing procedure is indeed asymptotically optimal. Finally we conclude with some discussions in Section \ref{section:discussion}. The proof of the main results is relegated to Section \ref{section:technical_details}. The proof of additional technical arguments can be found in \cite{mmy2017}.

\section{Sparse Testing under Curie-Weiss Model}\label{section:curie_weiss}

In most statistical problems, dependence reduces effective sample size and therefore makes inference harder. This, however,  turns out not necessarily to be the case in our setting. The effect of dependence on sparse testing under Ising model is more profound. To make this more clear we first consider one of the most popular examples of Ising models, namely the Curie-Weiss model. In the Curie-Weiss model,
\begin{align}\label{eq:Curie}
\Ptheta(\bX=\bx):=\frac{1}{Z(\theta,\bmu)}\exp\left(\frac{\theta}{n}\sum_{1\le i<j\le n}x_ix_j+\sum_{i=1}^n\mu_ix_i\right),
\end{align}
where in this section, with slight abuse of notation, we rename  $\P_{\bQ,\bmu},\E_{\bQ,\bmu},$ and $Z(\bQ,\bmu)$ by $\Ptheta$, $\Etheta$ and $Z(\theta,\bmu)$ respectively, for brevity.  The Curie-Weiss model is deceivingly simple and one of the classical examples that exhibit the so-called ``thermodynamic'' phase transition at $\theta=1$. See, e.g., \cite{kac1969mathematical,nishimori2001statistical}. It turns out that such a phase transition directly impacts how well a sparse signal can be detected. Following the convention, we shall refer to $\theta=1$ as the critical state, $\theta>1$ the low temperature states and $\theta<1$ the high temperature states.

\subsection{High temperature states}
We consider first the high temperature case i.e. $0\leq \theta<1$. It is instructive to begin with the case when $\theta=0$, that is, $X_1,\ldots, X_n$ are independent Bernoulli random variables. By Central Limit Theorem
$$
\sqrt{n}\left(\bar{X}-{1\over n}\sum_{i=1}^n \tanh(\mu_i)\right)\to_d N\left(0,{1\over n}\sum_{i=1}^n \sech^2(\mu_i)\right),
$$
where
$$\bar{X}={1\over n}\sum_{i=1}^n X_i.$$
In particular, under the null hypothesis,
$$
\sqrt{n}\bar{X}\to_d N\left(0,1\right).
$$
This immediately suggests a test that rejects $H_0$ if and only $\sqrt{n}\bar{X}\ge L_n$ for a diverging sequence $L_n=o(n^{-1/2}s\tanh(B))$ is asymptotic powerful, in the sense of (\ref{eqn:powerful}), for testing (\ref{eqn:hypo_sparse}) whenever $s\tanh(B)\gg n^{1/2}$. This turns out to be the best one can do in that there is no powerful test for testing (\ref{eqn:hypo_sparse}) if $s\tanh(B)=O(n^{1/2})$. See, e.g., \cite{mukherjee2015hypothesis}. An immediate question of interest is what happens if there is dependence, that is $0<\theta<1$. This is answered by Theorem \ref{thm:curie} below. 

\begin{theorem}\label{thm:curie}
Consider testing \eqref{eqn:hypo_sparse} based on $\bX$ following the Curie-Weiss model (\ref{eq:Curie}) with $0\leq \theta<1$. If $s\tanh(B)\gg n^{1/2}$, then the test that rejects $H_0$ if and only if $\sqrt{n}\bar{X}\ge L_n$ for a diverging $L_n$ such that $L_n=o(n^{-1/2}s\tanh(B))$ is asymptotically powerful for \eqref{eqn:hypo_sparse}. Conversely, if $s\tanh(B)=O( n^{1/2})$, then there is no asymptotically powerful test for \eqref{eqn:hypo_sparse}.
\end{theorem}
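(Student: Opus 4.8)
The plan is to work throughout with the single statistic $\bar X:=n^{-1}\sum_{i=1}^n X_i$ (equivalently $\sqrt n\,\bar X$): for the positive part I would pin down its fluctuations in both directions via the method of exchangeable pairs, and for the matching impossibility result I would run a second-moment (chi-square) argument built on the Hubbard--Stratonovich representation of the Curie--Weiss partition function.

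For the positive part, introduce the one-coordinate resampling exchangeable pair $(\bX,\bX')$: given $\bX$, draw $I$ uniformly on $\{1,\dots,n\}$ and replace $X_I$ by an independent draw from its conditional law, so that $m(\bX)-m(\bX')=n^{-1}(X_I-X_I')$ with $m=\bar X$. Since $\E[X_i\mid X_{-i}]=\tanh(\theta\bar X-\tfrac\theta n X_i+\mu_i)$, the drift $D(\bX):=n\,\E[m(\bX)-m(\bX')\mid\bX]$ equals $\bar X-\tfrac1n\sum_i\tanh(\theta\bar X-\tfrac\theta n X_i+\mu_i)$. Using the antisymmetry identity $\E[f(\bX)g(\bX)]=\tfrac12\E[(m(\bX)-m(\bX'))(g(\bX)-g(\bX'))]$ with $f=D/n$ and $g=D$, together with $|m(\bX)-m(\bX')|\le2/n$ and $|D(\bX)-D(\bX')|\le C/n$ (only one coordinate moves and $\tanh$ is $1$-Lipschitz), one obtains $\E_{\theta,\bmu}[D(\bX)^2]\le C/n$, hence $D(\bX)=O_P(n^{-1/2})$ \emph{uniformly} in $\bmu$. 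Since $|\tanh(\theta\bar X-\tfrac\theta n X_i+\mu_i)-\tanh(\theta\bar X+\mu_i)|\le\theta/n$, this gives $\bar X-h(\bar X)=O_P(n^{-1/2})$ with $h(m):=n^{-1}\sum_i\tanh(\theta m+\mu_i)$; because $0\le h'\le\theta<1$, the map $m\mapsto m-h(m)$ is a strictly increasing bijection of $\R$ with a unique zero $m^\ast$ and $|m-h(m)|\ge(1-\theta)|m-m^\ast|$, so $\bar X=m^\ast+O_P(n^{-1/2})$ uniformly. Under $H_0$, $m^\ast=0$, whence $\sqrt n\,\bar X=O_P(1)$ and $\P_{\theta,\mathbf 0}(\sqrt n\,\bar X\ge L_n)\to0$ for any diverging $L_n$. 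Under $\bmu\in\Xi(s,B)$, the identity $m^\ast=h(m^\ast)\ge n^{-1}\sum_{i\in\mathrm{supp}(\bmu)}\tanh(\theta m^\ast+\mu_i)\ge s\tanh(B)/n\gg n^{-1/2}$ forces $\sqrt n\,\bar X\ge\tfrac12 n^{-1/2}s\tanh(B)\gg L_n$ with probability $\to1$, uniformly over $\Xi(s,B)$; hence $\sup_{\bmu}\P_{\theta,\bmu}(\sqrt n\,\bar X<L_n)\to0$, and the test is asymptotically powerful.

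For the converse I would first observe that for any prior $\pi$ on $\Xi(s,B)$, writing $\bar\P_\pi=\int\P_{\theta,\bmu}\,d\pi(\bmu)$ and $L=d\bar\P_\pi/d\P_{\theta,\mathbf 0}$, a bound of the form $\E_{\P_{\theta,\mathbf 0}}[L^2]\le C$ uniformly in $n$ already suffices: uniform integrability of $\{L\}$ gives $\E_{\P_{\theta,\mathbf 0}}[L\mathbf 1_{L>\tau}]\to0$ as $\tau\to\infty$ (uniformly in $n$), and combining $\mathrm{Risk}(T)\ge\P_{\theta,\mathbf 0}(T=1)$ with $\mathrm{Risk}(T)\ge1-\E_{\P_{\theta,\mathbf 0}}[L\mathbf 1_{T=1}]\ge1-\E_{\P_{\theta,\mathbf 0}}[L\mathbf 1_{L>\tau}]-\tau\,\P_{\theta,\mathbf 0}(T=1)$ and optimizing over $\P_{\theta,\mathbf 0}(T=1)$ yields $\liminf_n\inf_T\mathrm{Risk}(T)>0$. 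It then remains to choose $\pi$ well and bound $\E_{\P_{\theta,\mathbf 0}}[L^2]$. I would take $\pi$ supported on $\bmu=B\mathbf 1_S$ with $S$ a uniformly random $s$-subset, so that (using $\E_{\P_{\theta,\mathbf 0}}[e^{\bnu^\top\bX}]=Z(\theta,\bnu)/Z(\theta,\mathbf 0)$) one gets $\E_{\P_{\theta,\mathbf 0}}[L^2]=\E_{S,S'}\big[\tfrac{Z(\theta,\mathbf 0)Z(\theta,B\mathbf 1_S+B\mathbf 1_{S'})}{Z(\theta,B\mathbf 1_S)Z(\theta,B\mathbf 1_{S'})}\big]$, which depends on $(S,S')$ only through $k:=|S\cap S'|$. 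The key device is the Hubbard--Stratonovich / mode-conditioning identity $\P_{\theta,\bnu}=\int\big(\bigotimes_i\mathrm{Bern}_{\pm}(\tanh(z+\nu_i))\big)\,dG_\bnu(z)$, where $G_\bnu$ has density $\propto\exp(-nz^2/(2\theta)+\sum_i\log\cosh(z+\nu_i))$; conditioning on $z$ turns the partition-function ratio into a product over coordinates and gives exactly $\tfrac{Z(\theta,\mathbf 0)Z(\theta,B\mathbf 1_S+B\mathbf 1_{S'})}{Z(\theta,B\mathbf 1_S)Z(\theta,B\mathbf 1_{S'})}=(1+\tanh^2 B)^{k}\cdot\tfrac{\E_{G_0}[(1+\tanh z\tanh 2B)^{k}(1+\tanh z\tanh B)^{2(s-k)}]}{\E_{G_0}[(1+\tanh z\tanh B)^{s}]^2}$, the first factor being precisely the answer for the independent model ($\theta=0$) and the second a correction from the global mode $z$. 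For $\theta<1$ the null mode law $G_0$ concentrates on scale $n^{-1/2}$ about $0$, so I would approximate $G_0$ by $N(0,\theta/(n(1-\theta)))$ and bound the $z$-expectations by Gaussian moment generating functions; using $s\tanh(B)=O(n^{1/2})$ and $\tanh B\le1$ one finds the correction factor is at most $e^{O(1)}$ times a factor nonincreasing in $k$, hence $\E_{\P_{\theta,\mathbf 0}}[L^2]\le e^{O(1)}\,\E_k[(1+\tanh^2 B)^{k}]$, and a standard hypergeometric moment bound gives $\E_k[(1+\tanh^2 B)^{k}]\le\exp(O((s\tanh B)^2/n))=e^{O(1)}$, so $\E_{\P_{\theta,\mathbf 0}}[L^2]=O(1)$, as required.

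The step I expect to be the main obstacle is this last one: making the Gaussian approximation to $G_0$ and the control of $\E_{G_0}[(1+\tanh z\tanh(cB))^{m}]$ rigorous and uniform over the relevant ranges of $k$ and $m$ (which can be as large as $s$, possibly of order $n$), keeping explicit track of the errors incurred both by non-Gaussianity of $G_0$ and by the gap between $(1+u)^m$ and $e^{mu}$, while retaining enough dependence on $k$ to feed the hypergeometric moment bound --- this is exactly where the threshold $s\tanh(B)\asymp n^{1/2}$ is pinned down. Everything else --- the exchangeable-pair drift estimate, the fixed-point argument for $m^\ast$, the partition-function identity, and the uniform-integrability reduction --- is routine once this Laplace-type estimate is in place; in particular the positive direction follows immediately from the uniform bound $\E_{\theta,\bmu}[D(\bX)^2]\le C/n$.
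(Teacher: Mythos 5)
Your positive direction is correct and rests on the same engine as the paper's proof---exchangeable-pairs / Stein-method concentration of the conditionally centered magnetization (cf.\ Lemma \ref{lemma_chatterjee})---but you package it more symmetrically: rather than citing the Ellis--Newman CLT under $H_0$ and running a separate drift estimate under $H_1$ as the paper does, you prove once that $\bar X=m^{\ast}+O_P(n^{-1/2})$ uniformly in $\bmu$ via the contraction $|m-h(m)|\ge(1-\theta)|m-m^{\ast}|$ (exactly where $\theta<1$ enters), then read off both error probabilities from $m^{\ast}(\mathbf 0)=0$ and $m^{\ast}(\bmu)\ge s\tanh(B)/n$. That is a clean reorganization, but the same argument.

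Your converse is genuinely different from the paper's. The paper obtains the impossibility half of Theorem \ref{thm:curie} as an immediate corollary of the general Theorem \ref{thm:lower} (the Curie--Weiss matrix with $\theta<1$ satisfies $\bQ_{ij}\ge 0$, $\|\bQ\|_{\ell_\infty\to\ell_\infty}<1$, $\|\bQ\|_{\rm F}^2=O(1)$, and $\bQ\mathbf 1$ constant), which is proved by reducing to the distribution of $n\bar X$ and controlling both tails through the $\Delta_1+\cdots+\Delta_4$ conditional-mean decomposition, Lemma \ref{lem:step1}, Holley's inequality, and an exponential tilting. You propose instead the Ingster-style chi-square / second-moment method: bound $\E_{\P_{\theta,\mathbf 0}}[L_\pi^2]=O(1)$ for the uniform prior on $s$-subsets, convert this into $\liminf_n\inf_T\mathrm{Risk}(T)>0$ via a (correct) uniform-integrability argument, and evaluate the second moment through the Kac / Hubbard--Stratonovich auxiliary-variable factorization of the partition function. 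Your identity for the ratio $Z(\theta,\mathbf 0)Z(\theta,B\mathbf 1_S+B\mathbf 1_{S'})/\bigl(Z(\theta,B\mathbf 1_S)Z(\theta,B\mathbf 1_{S'})\bigr)$ is right (the factor $\cosh(2B)/\cosh^2 B=1+\tanh^2 B$ isolates the independent-model answer and the remainder is a one-dimensional expectation against the mode law $G_0$), and the hypergeometric bound $\E_k[(1+\tanh^2B)^k]=e^{O((s\tanh B)^2/n)}=O(1)$ is standard. The step you flag as the main obstacle is indeed where the proposal is still only a plan: you must show the mode-correction factor is $e^{O(1)}$ uniformly over $0\le k\le s\le n$ and all admissible $(s,B)$, controlling both the non-Gaussian tails of $G_0$ and the error in $(1+u)^m\approx e^{mu}$ when $m$ may be of order $n$; until that Laplace-type estimate is supplied the converse is not proved. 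The trade-off between the two routes is instructive: the paper's argument needs more machinery but automatically covers Ising models on a broad class of graphs, while yours is self-contained for Curie--Weiss and follows the standard sparse-detection second-moment playbook, at the price of a delicate one-dimensional Laplace computation.
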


Theorem \ref{thm:curie} shows that, under high temperature states, the sparse testing problem \eqref{eqn:hypo_sparse} behaves similarly to the independent case. Not only the detection limit remains the same, but also it can be attained even if one neglects the dependence while constructing the test.

\subsection{Low temperature states}
Now consider the low temperature case when $\theta>1$. The na\"ive test that rejects $H_0$ whenever $\sqrt{n}\bar{X}\ge L_n$ is no longer asymptotically powerful in these situations. In particular, $\bar{X}$ concentrates around the roots of $x=\tanh(\theta x)$ and $\sqrt{n}\bar{X}$ is larger than any $L_n=O(n^{1/2})$ with a non-vanishing probability, which results in an asymptotically strictly positive probability of Type I error for a test based on rejecting  $H_0$ if $\sqrt{n}\bar{X}\ge L_n$.

To overcome this difficulty, we shall consider a slightly modified test statistic:
$$
\tilde{X}={1\over n}\sum_{i=1}^n \left[X_i-\tanh\left({\theta\over n}\sum_{j\neq i}X_j\right)\right],
$$
Note that
$$
\tanh\left({\theta\over n}\sum_{j\neq i}X_j\right)=\E_{\theta,\mathbf{0}}(X_i|X_j: j\neq i)
$$
is the conditional mean of $X_i$ given $\{X_j: j\neq i\}$ under the Curie-Weiss model with $\bmu=\mathbf{0}$. In other words, we average after centering each observation $X_i$ by its conditional mean, instead of the unconditional mean, under $H_0$. \tcred{The idea of centering by the conditional mean is similar in spirit to the pseudo-likelihood estimate of \cite{besag1974spatial,besag1975statistical}. See also \cite{guyon1995random,chatterjee2007estimation,bhattacharya2015inference}.}

We can then proceed to reject $H_0$ if and only if $\sqrt{n}\tilde{X}\ge L_n$. The next theorem shows that this procedure is indeed optimal with appropriate choice of $L_n$.

\begin{theorem}\label{thm:curie1}
Consider testing \eqref{eqn:hypo_sparse} based on $\bX$ following the Curie-Weiss model (\ref{eq:Curie}) with $\theta>1$. If $s\tanh(B)\gg n^{1/2}$, then the test that rejects $H_0$ if and only if $\sqrt{n}\tilde{X}\ge L_n$ for a diverging $L_n$ such that $L_n=o(n^{-1/2}s\tanh(B))$ is asymptotically powerful for \eqref{eqn:hypo_sparse}. Conversely, if $s\tanh(B)=O(n^{1/2})$, then there is no asymptotically powerful test for \eqref{eqn:hypo_sparse}.
\end{theorem}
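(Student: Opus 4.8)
\emph{Achievability.} Write $\sqrt n\,\tilde X=\tfrac1{\sqrt n}\sum_{i=1}^n(X_i-m_i^{(0)}(\bX))$, where $m_i^{(0)}(\bX)=\tanh(\tfrac\theta n\sum_{j\ne i}X_j)=\E_{\theta,\mathbf 0}[X_i\mid X_j,j\ne i]$, and let $m_i^{(\bmu)}(\bX)=\tanh(a_i+\mu_i)$ with $a_i:=\tfrac\theta n\sum_{j\ne i}X_j$ denote the conditional mean under $\P_{\theta,\bmu}$. The plan is to use the decomposition
\[
\sqrt n\,\tilde X=\frac1{\sqrt n}\sum_{i=1}^n\big(X_i-m_i^{(\bmu)}(\bX)\big)+\frac1{\sqrt n}\sum_{i\in\mathrm{supp}(\bmu)}\big(\tanh(a_i+\mu_i)-\tanh(a_i)\big)=:\frac{A}{\sqrt n}+\frac{G}{\sqrt n}.
\]
Since $m_i^{(\bmu)}$ is the conditional mean, $\E_{\theta,\bmu}[A]=0$, and a short coordinatewise conditioning computation gives $|\E_{\theta,\bmu}[(X_i-m_i^{(\bmu)})(X_j-m_j^{(\bmu)})]|\le|Q_{ij}|$, whence $\E_{\theta,\bmu}[A^2]\le n+\sum_{i\ne j}|Q_{ij}|=n+(n-1)\theta=O(n)$ (this is the second-moment/exchangeable-pair bound advertised in the introduction, and it holds under either hypothesis). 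For $G$, the Curie--Weiss structure forces $|a_i|\le\theta$ \emph{deterministically}, and $a\mapsto\tanh(a+\mu)-\tanh(a)$ is minimized over $|a|\le\theta$ at $a=\theta$; hence each summand of $G$ is at least $\tanh(\theta+\mu_i)-\tanh(\theta)\ge\tfrac2{e^{2\theta}+1}\tanh(\mu_i)\ge c_\theta\tanh B$ with $c_\theta:=\tfrac2{e^{2\theta}+1}>0$, so $G\ge c_\theta\, s\tanh B$ \emph{surely}, with no concentration argument needed --- this is precisely where centering by the conditional rather than unconditional mean pays off. Under $H_0$, $G=0$, so $\sqrt n\,\tilde X=A/\sqrt n=O_{\P_{\theta,\mathbf 0}}(1)$ by Chebyshev and any $L_n\to\infty$ makes the Type I error vanish; under $H_1$, $\sqrt n\,\tilde X\ge(c_\theta s\tanh B-|A|)/\sqrt n$, and since $\mathrm{Var}(A)=O(n)=o((s\tanh B)^2)$ and $L_n\sqrt n=o(s\tanh B)$, Chebyshev gives $\P_{\theta,\bmu}(\sqrt n\,\tilde X<L_n)\to0$ uniformly over $\Xi(s,B)$. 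This yields the first assertion.

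\emph{Impossibility.} The structural fact I would exploit is that, conditionally on the magnetization $M=\sum_iX_i$, the law of $\bX$ under $\P_{\theta,\bmu}$ is $\propto e^{\bmu^\top\bx}$ on $\{\sum x_i=M\}$, i.e.\ it coincides with the \emph{independent} tilted law $\mathrm{Q}_{\bmu}$ (product Bernoulli, $\P(X_i=1)\propto e^{\mu_i}$) conditioned on $\{\sum X_i=M\}$, and in particular does not involve $\theta$. Hence $\P_{\theta,\bmu}=\sum_M p_{\theta,\bmu}(M)\,\mathrm{Q}_{\bmu}(\cdot\mid M)$ with $p_{\theta,\bmu}(M)=\P_{\theta,\bmu}(\sum X_i=M)$. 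The plan is: (i) via a Hubbard--Stratonovich representation of the partition function and a Laplace/saddle-point analysis (equivalently a magnetization CLT obtained by the exchangeable-pair method), show that under $H_0$ the mass of $p_{\theta,\mathbf 0}$ is, up to $o(1)$, an equal mixture of two bumps of width $\Theta(\sqrt n)$ centered at $\pm nm^*$ (where $m^*=\tanh(\theta m^*)>0$), while under $H_1$ with $\bmu\in\Xi(s,B)$ the mass of $p_{\theta,\bmu}$ is, up to $o(1)$, a single bump of width $\Theta(\sqrt n)$ centered within $O(s\tanh B)=O(\sqrt n)$ of $nm^*$; (ii) deduce that there is a window $\mathcal M$ of width $\asymp\sqrt n$ around $nm^*$ carrying non-vanishing mass under $\P_{\theta,\mathbf 0}$ on which any hypothetically powerful test $T$ would have to satisfy $\mathrm{Q}_{\mathbf 0}(T=1\mid M)=o(1)$ and $\sup_{\bmu\in\Xi(s,B)}\mathrm{Q}_{\bmu}(T=0\mid M)=o(1)$; (iii) contradict this using the independent-case lower bound (\cite{mukherjee2015hypothesis}, with the minor adjustment of conditioning on $\{\sum X_i=M\}$), which gives no powerful test for $\mathrm{Q}_{\mathbf 0}(\cdot\mid M)$ versus $\{\mathrm{Q}_{\bmu}(\cdot\mid M):\bmu\in\Xi(s,B)\}$ when $s\tanh B=O(\sqrt n)$. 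An alternative is a truncated second-moment argument directly on $\P_{\theta,\bmu}$ with the least-favorable prior $\bmu=B\mathbf 1_S$, $S$ uniform of size $s$: Hubbard--Stratonovich turns the $\chi^2$ divergence into $\E_{S,S'}$ of a ratio of one-dimensional Laplace integrals controlled by the overlap $|S\cap S'|$ (typically $\asymp s^2/n$), which is of order $\exp\{O((s\tanh B)^2/n)\}=O(1)$, and a truncation of the likelihood ratio then gives $\liminf_n\mathrm{Risk}>0$.

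The part I expect to be hardest is step (i) of the impossibility proof: pinning down the law of the magnetization under both hypotheses at low temperature --- in particular showing that the positive external field pushes essentially all of $p_{\theta,\bmu}$ into a single well displaced by only $O(\sqrt n)$, and controlling the Gaussian-scale fluctuations within a well precisely enough to produce the overlapping window $\mathcal M$. Note that the naive $\chi^2$ does \emph{not} tend to $0$ here (the two symmetric wells of $\P_{\theta,\mathbf 0}$ contribute a constant multiplicative factor, a manifestation of long-range order), which is exactly why the argument must be organized around the wells, or implemented with a truncation; getting the fluctuation scale and the constants right is the delicate step, and is where the exchangeable-pair machinery enters.
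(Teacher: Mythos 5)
Your achievability argument is sound and is essentially the decomposition the paper uses for Theorem~\ref{thm:upper}: a mean-zero residual plus a deterministic signal term $\ge\tfrac{s\tanh B}{n}(1-\tanh\theta)$, the only real difference being that you control the residual by Chebyshev and a cross-covariance bound rather than by the exponential concentration of Lemma~\ref{lemma_chatterjee}. (Your covariance claim is off by a factor of two --- writing $\tilde m_j$ for $m_j$ with its $X_i$-dependence stripped out, conditioning on $X_{-i}$ gives $\E[(X_i-m_i)(X_j-\tilde m_j)]=0$ and $|m_j-\tilde m_j|\le|Q_{ij}|$, so the bound is $2|Q_{ij}|$ --- but the order is right and the argument goes through.)

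The impossibility part is where there is a genuine gap. The paper's proof reduces, via the prior $\pi(\bmu)\propto Z(\bQ,\bmu)$, to the fact that the optimal test is a threshold in $M=\sum_iX_i$, and then shows the $M$-marginals overlap at scale $\sqrt n$ by invoking Ellis--Newman's conditional CLT $\sqrt n(\bar X-m)\mid\bar X>0\Rightarrow N(\cdot)$ under $H_0$, together with the exchangeable-pair concentration and the convexity of $g(x)=x-\tanh(\theta x)$ near its positive root $m$ to get $\bar X-m=O(n^{-1/2})$ w.h.p.\ under $H_1$. Your route is different: you condition on $M$, observe (correctly, and this is a nice observation) that the conditional law $\P_{\theta,\bmu}(\cdot\mid M)$ is temperature-free, and then want to invoke a conditional version of the independent-sequence lower bound. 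But this step (iii) is both unnecessary and not actually available as stated. It is unnecessary because, for any permutation-invariant prior on $\{B\mathbf 1_S\}$, the mixture likelihood ratio is a function of $M$ alone, so the mixture conditional law given $M$ \emph{coincides} with the null conditional law; the conditional testing problem is vacuous, and everything collapses to the $M$-marginal overlap of your step (i). It is not available because, if you instead work pointwise in $\bmu$, the testing problem on the slice $\{\sum x_i=M\}$ is not a ``minor adjustment'' of the unconditional problem in \cite{mukherjee2015hypothesis} --- conditioning on the sum induces negative dependence and hypergeometric rather than binomial combinatorics, and you give no argument. Meanwhile step (i), pinning the $M$-marginal to wells of width $\Theta(\sqrt n)$ with the alternative well displaced by only $O(\sqrt n)$, is exactly the hard part of the theorem and is left as a plan; the Ellis--Newman conditional CLT, the concentration bound, and the convexity-of-$g$ argument the paper uses are precisely what executes it. The truncated-$\chi^2$/Hubbard--Stratonovich alternative you mention is plausible and you correctly diagnose the failure of the unrestricted $\chi^2$ (constant contribution from the $-m$ well), but restricting to one well and controlling the resulting overlap integral $\E_{S,S'}[\cdot]$ is the real work and is not carried out.
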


Theorem \ref{thm:curie1} shows that the detection limits for low temperature states remain the same as that for high temperature states, but a different test is required to achieve it.

\subsection{Critical state}
The situation however changes at the critical state $\theta=1$, where a much weaker signal could still be detected. This is made precise by our next theorem, where we show that detection thresholds, in terms of $s\tanh(B)$, for the corresponding Curie-Weiss model at criticality scales as $n^{-3/4}$ instead of $n^{-1/2}$ as in either low or high temperature states. Moreover, it is attainable by the test that rejects $H_0$ whenever $n^{1/4}\bar{X}\ge L_n$ for appropriately chosen $L_n$.
\begin{theorem}\label{thm:corr}
Consider testing \eqref{eqn:hypo_sparse} based on $\bX$ following the Curie-Weiss model (\ref{eq:Curie}) with $\theta=1$. If $s\tanh(B)\gg n^{1/4}$, then a test that rejects $H_0$ if and only if $n^{1/4}\bar{X}\ge L_n$ for a suitably chosen diverging sequence $L_n$, is asymptotically powerful for \eqref{eqn:hypo_sparse}. Conversely, if $s\tanh(B)=O(n^{1/4})$, then there is no asymptotically powerful test for \eqref{eqn:hypo_sparse}.
\end{theorem}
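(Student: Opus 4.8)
The plan is to establish the achievability and impossibility parts separately, in the spirit of the proofs of Theorems~\ref{thm:curie} and~\ref{thm:curie1}, but now working at the quartic critical point $\theta=1$.

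\emph{Behaviour under $H_0$ and achievability.} I would first record the classical criticality fact that under $\P_{1,\mathbf{0}}$ one has $n^{1/4}\bar{X}\to_{d}$ the law with Lebesgue density proportional to $e^{-x^{4}/12}$; the cleanest derivation is via the method of exchangeable pairs, which also yields a uniform tail bound $\P_{1,\mathbf{0}}(\bar{X}\ge t)\le C e^{-cnt^{4}}$ for $t$ up to a constant. Consequently the Type~I error of the test ``reject iff $n^{1/4}\bar{X}\ge L_{n}$'' vanishes for every diverging $L_{n}$. For the Type~II error, I would analyze $\bar{X}$ under $\P_{1,\bmu}$ with $\bmu\in\Xi(s,B)$: its drift is governed by $h(m)=n^{-1}\sum_{i=1}^{n}\tanh(m+\mu_{i})-m$, and since $h'\le 0$ this has a unique root $m_{\ast}=m_{\ast}(\bmu)\in(0,1]$. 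Writing $m_{\ast}-\tanh m_{\ast}=n^{-1}\sum_{i\in\mathrm{supp}(\bmu)}[\tanh(m_{\ast}+\mu_{i})-\tanh m_{\ast}]$, and using $m-\tanh m\le m^{3}/3$ together with $\tanh(m+\mu)-\tanh m\gtrsim\tanh B$ for $\mu\ge B$, $0\le m\le 1$, yields $m_{\ast}\gtrsim (s\tanh B/n)^{1/3}$ uniformly over $\Xi(s,B)$. An exchangeable-pairs concentration bound then gives $\bar{X}\ge m_{\ast}/2$ with probability tending to $1$, uniformly, since in the regime $s\tanh B\gg n^{1/4}$ one has $m_{\ast}\gg n^{-1/4}$ and the fluctuations of $\bar{X}$ about $m_{\ast}$ are of smaller order than $m_{\ast}$. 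Hence $n^{1/4}\bar{X}\ge\tfrac12 n^{1/4}m_{\ast}\gtrsim n^{-1/12}(s\tanh B)^{1/3}\to\infty$ uniformly, so the Type~II error vanishes for $L_{n}$ diverging slowly enough.

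\emph{Impossibility.} For $s\tanh B=O(n^{1/4})$ I would use the second moment method: take the prior $\pi$ that draws the support $S$ uniformly among $s$-subsets of $\{1,\ldots,n\}$ and sets $\bmu=B\mathbf{1}_{S}$, and let $\bar{\P}=\int\P_{1,\bmu}\,d\pi$. Since $\mathrm{Risk}(T)\ge 1-\|\bar{\P}-\P_{1,\mathbf{0}}\|_{\mathrm{TV}}$, it suffices to bound $\chi^{2}(\bar{\P}\,\|\,\P_{1,\mathbf{0}})$ by a constant (a standard truncation argument then gives $\liminf_{n}\mathrm{Risk}>0$). The key identity is
$$\E_{1,\mathbf{0}}\Big[\tfrac{d\P_{1,\bmu}}{d\P_{1,\mathbf{0}}}\tfrac{d\P_{1,\bmu'}}{d\P_{1,\mathbf{0}}}\Big]=\frac{Z(1,\mathbf{0})\,Z(1,\bmu+\bmu')}{Z(1,\bmu)\,Z(1,\bmu')},$$
which I would evaluate through the Hubbard--Stratonovich representation $Z(1,\bmu)\propto\int_{\R}e^{-nu^{2}/2}\prod_{i=1}^{n}\cosh(u+\mu_{i})\,du$; at $\theta=1$ the exponent is maximized at $u=0$ where it is quartic, so the integral is carried by $u\asymp n^{-1/4}$. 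Expanding $\log\cosh(u+\mu_{i})-\log\cosh u$ as $u\to 0$ and using $s\tanh B=O(n^{1/4})$ to absorb the quadratic and higher corrections (each of size $O(s\tanh B\cdot n^{-1/2})=o(1)$ on the relevant scale) gives $Z(1,\bmu)/Z(1,\mathbf{0})=(1+o(1))(\cosh B)^{s}\,\E[e^{(s\tanh B)\widetilde{U}}]$, where $\widetilde{U}$ has density $\propto e^{-nu^{2}/2}(\cosh u)^{n}$ (so $n^{1/4}\widetilde{U}\to_{d}$ the $e^{-w^{4}/12}$ law), and similarly for $\bmu+\bmu'$, which has field $2B$ on $S\cap S'$ and $B$ on $S\triangle S'$. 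This reduces the identity to
$$\frac{Z(1,\mathbf{0})Z(1,\bmu+\bmu')}{Z(1,\bmu)Z(1,\bmu')}=(1+o(1))(1+\tanh^{2}B)^{|S\cap S'|}\,\frac{\E[e^{c(S,S')\widetilde{U}}]}{\E[e^{(s\tanh B)\widetilde{U}}]^{2}},$$
with $c(S,S')=|S\cap S'|\tanh 2B+|S\triangle S'|\tanh B\le 2s\tanh B$. Taking the expectation over $\pi\otimes\pi$: since $|S\cap S'|$ is hypergeometric, $\E[(1+\tanh^{2}B)^{|S\cap S'|}]\le e^{s^{2}\tanh^{2}B/n}=1+o(1)$ because $s^{2}\tanh^{2}B/n=(s\tanh B)^{2}/n=O(n^{-1/2})$; and since $s\tanh B=O(n^{1/4})$ while $\widetilde{U}=O_{\P}(n^{-1/4})$ with $\P(\widetilde{U}\ge t)\le e^{-cnt^{4}}$, all the exponential moments above remain bounded, uniformly in $S,S'$. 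Hence $\chi^{2}(\bar{\P}\,\|\,\P_{1,\mathbf{0}})$ stays bounded, completing the converse.

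\emph{Main obstacle.} The crux throughout is the criticality. Under $H_0$ the fluctuations of $\bar{X}$ are of order $n^{-1/4}$ and non-Gaussian, so both the null tail bound and the concentration of $\bar{X}$ about $m_{\ast}$ under the alternative must be supplied by the exchangeable-pairs machinery rather than a central limit theorem, and one has to track the crossover $m_{\ast}\sim n^{-1/4}$ that separates the critical ($n^{-1/4}$) from the near-critical Gaussian regime. On the converse side the delicate point is that, unlike for $\theta<1$, the Hubbard--Stratonovich reduction leaves a quartic rather than a Gaussian integral, so the partition-function ratios are controlled by exponential moments of the non-Gaussian variable $\widetilde{U}$; keeping both the expansion error terms and these moments under control uniformly over the prior and over $\Xi(s,B)$ is where the real work lies.
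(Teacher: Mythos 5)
Your overall plan is sound, and your lower bound takes a genuinely different route from the paper. For the impossibility direction the paper does not bound a $\chi^2$-divergence at all: it chooses the prior $\pi(\bmu)\propto Z(\bQ,\bmu)$ over $\widetilde\Xi(s,B)$ (which for Curie--Weiss reduces to the uniform prior by symmetry), observes that the resulting mixture likelihood ratio is monotone in $\bar X$, and thereby reduces the problem to controlling $\P_{1,\mathbf 0}(\sum X_i > \kappa_n)$ and $\P_{1,\bmu}(\sum X_i\le\kappa_n)$. It then analyzes the law of $\bar X$ under both hypotheses through the Kac auxiliary variable $Z_n\sim N(\bar X, 1/n)$ (Lemma~\ref{lem:bayesian}) and Laplace-type estimates on the marginal density $\propto e^{-f_{n,\bmu}(z)}$ (Lemma~\ref{lem:theta1}). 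Your Hubbard--Stratonovich second-moment computation is the same Gaussian trick applied to partition-function ratios rather than to the conditional law, and it plausibly goes through: the expansion of $(1+\tanh u\tanh B)^s$ at scale $u\asymp n^{-1/4}$ does kill all sub-leading terms when $s\tanh B=O(n^{1/4})$, the hypergeometric moment $\E[(1+\tanh^2 B)^{|S\cap S'|}]\le e^{(s\tanh B)^2/n}=1+o(1)$, and $\E[e^{c\widetilde U}]$ stays bounded since $c\widetilde U=O_\P(1)$. The payoff of the paper's route is that it sidesteps the $\chi^2$ bookkeeping entirely; the payoff of yours is that it would extend to priors without a monotone likelihood ratio.

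On the achievability side, however, your sketch papers over the step that actually requires work, and as stated it has a gap. You write that an exchangeable-pairs concentration bound gives $\bar X\ge m_\ast/2$ with probability tending to one because ``the fluctuations of $\bar X$ about $m_\ast$ are of smaller order than $m_\ast$.'' The concentration inequality the paper proves by exchangeable pairs (Lemma~\ref{lemma_chatterjee}) controls $f_{\bQ,\bmu}(\bX)=-h(\bar X)+O(1/n)$ on the scale $t\gg n^{-1/2}$, i.e.\ $\P(|h(\bar X)|\ge t)\lesssim e^{-cnt^2}$. But to force $\bar X$ away from $0$ you need $|h(\bar X)|<\varepsilon_n$ with $\varepsilon_n\lesssim A(\bmu)=s\tanh(B)/n$, and when $s\tanh B$ is only slightly larger than $n^{1/4}$ this $\varepsilon_n$ is $\ll n^{-1/2}$, so the bound is vacuous. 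So the direct application of the $\sqrt n$-scale exchangeable-pairs inequality does not give concentration at the $n^{-1/4}$ scale. The paper circumvents this through the auxiliary variable: Lemma~\ref{lem:bayesian}(c) gives $\E(\sum(X_i-\tanh(\mu_i+Z_n)))^2\le n$ uniformly (a much cheaper statement), while the real concentration work is shifted to the one-dimensional variable $Z_n$ via Laplace estimates on $f_{n,\bmu}$ (Lemma~\ref{lem:theta1}(a),(c)). You do acknowledge in your ``main obstacle'' remark that the quartic criticality forces a non-CLT argument, but an explicit $n t^4$-type exchangeable-pairs bound for the conditionally-centered statistic, uniform over $\bmu$ in the near-critical crossover $m_\ast\asymp n^{-1/4}$, is not a citation-level fact and would need to be supplied; the paper's Kac-transform route is precisely the mechanism that makes this step tractable.
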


A few comments are in order about the implications of Theorem \ref{thm:corr} in contrast to Theorem \ref{thm:curie} and \ref{thm:curie1}. Previously, the distributional limits for the total magnetization $\sum_{i=1}^n X_i$ has been characterized in all the three regimes of high $(\theta<1)$, low $(\theta>1)$, and critical $(\theta=1)$ temperatures \citep{Ellis_Newman} when $\bmu=\mathbf{0}$. \tcred{More specifically, they show that
\begin{eqnarray*}
\sqrt{n}\bar{X}&\stackrel{d}{\rightarrow}&N\Big(0,\frac{1}{1-\theta}\Big)\qquad\text{ if }\theta <1,\\
n^{1/4}\bar{X}&\stackrel{d}{\rightarrow}&W\qquad\text{ if }\theta=1,\\
\Big(\sqrt{n}(\bar{X}-m(\theta))|\bar{X}>0\Big)&\stackrel{d}{\rightarrow}&N\Big(0,\frac{1}{1-\theta(1-m(\theta)^2)}\Big)\qquad\text{ if }\theta>1,
\end{eqnarray*}
where $W$ is a random variable on $\R$ with density proportional to $e^{-x^4/12}$ with respect to Lebesgue measure, and $m(\theta)$ is the unique positive root of the equation $z=\tanh(\theta z)$ for $\theta>1$. A central quantity of their analysis is studying the roots of this equation.
}
 Our results demonstrate parallel behavior in terms of detection of sparse external magnetization $\bmu$. \tcred{In particular, if the vector $\bmu=(B,\cdots,B,0,0,\cdots,0)$ with the number of nonzero components equal to $s$, we obtain the fixed point equation $z=p\tanh(\theta z+B)+(1-p)\tanh(\theta z)$, where $p:={s}/{n}$. One can get an informal explanation of the detection boundary for the various cases from this fixed point equation. As for example in the critical case when $\theta=1$, we get the equation $$z=p\tanh(z+B)+(1-p)\tanh(z)\Rightarrow z-\tanh(z)=p[\tanh(z+B)-\tanh(z)].$$ The LHS of the second equality is of order $z^3$ for $z\approx 0$, and the RHS is of order $p\tanh(B)$. This gives the relation $z^3\sim (p\tanh B)$, which gives the asymptotic order of the mean of $\bar{X}$ under the alternative as $z\sim (p\tanh B)^{1/3}$. Since under $H_0$ the fluctuation of $\bar{X}$ is $n^{-1/4}$, for successful detection we need $n^{-1/4}\ll (p\tanh B)^{1/3}$, which is equivalent to $s\tanh(B)\gg n^{1/4}$ on recalling that $s=np$. Similar heuristic justification holds for other values of $\theta$ as well.}
 
 Interestingly, both below and above phase transition the detection problem considered here behaves similar to that in a disordered system of i.i.d. random variables, in spite having different asymptotic behavior of the total magnetization in the two regimes. However, an interesting phenomenon continues to emerge at $\theta=1$ where one can detect a much smaller signal or external magnetization (magnitude of $s\tanh(B)$). In particular, according to Theorem \ref{thm:curie} and Theorem \ref{thm:curie1}, no signal is detectable of sparsity $s\ll \sqrt{n}$, when $\theta \neq 1$. In contrast, Theorem \ref{thm:corr} establishes signals satisfying $s\tanh(B)\gg n^{1/4}$ is detectable for $n^{1/4}\lesssim s\ll \sqrt{n}$, where $a_n\lesssim b_n$ means $a_n=O(b_n)$. As mentioned before, it is well known the Curie-Weiss model undergoes a phase transition at $\theta=1$. Theorem \ref{thm:corr} provides a rigorous verification of the fact that the phase transition point $\theta=1$ can  reflect itself in terms of detection problems, even though $\theta$ is  a nuisance parameter. In particular, the detection is easier than at non-criticality. This is interesting in its own right since the concentration of $\bar{X}$ under the null hypothesis is weaker than that for $\theta<1$ \citep{chatterjee2010applications} and yet a smaller amount of signal enables us to break free of the null fluctuations. We shall make this phenomenon more transparent in the proof of the theorem.

\medskip

\section{Sparse Testing under General Ising Models}\label{section:main_results}
As we can see from the previous section, the effect of dependence on sparse testing under Ising models is more subtle than the Gaussian case. It is of interest to investigate to what extent the behavior we observed for the Curie-Weiss model applies to the more general Ising model, and whether there is a more broadly applicable strategy to deal with the general dependence structure. To this end, we further explore the idea of centering by the conditional mean we employed to treat low temperature states under Curie-Weiss model, and argue that it indeed works under fairly general situations.

\subsection{Conditional mean centered tests}
Note that under the Ising model (\ref{eqn:general_ising}),
$$
\E_{\bQ,\mathbf{0}}(X_i|X_j: j\neq i)=\tanh(m_i(\bX))
$$
where
$$
m_i(\bX)=\sum_{j=1}^n Q_{ij}X_j.
$$
Following the same idea as before, we shall consider a test statistic
$$
\tilde{X}={1\over n}\sum_{i=1}^n [X_i-\tanh(m_i(\bX))],
$$
and proceed to reject $H_0$ if and only if $\sqrt{n}\tilde{X}\ge L_n$. The following result shows that the same detection limit $s\tanh(B)\gg n^{1/2}$ can be achieved by this test as long as $\|\bQ\|_{\ell_\infty\to \ell_\infty}=O_p(1)$, where $\|\bQ\|_{\ell_p\to \ell_q} =\max_{\|x\|_{\ell_p}\le 1}\|\bQ x\|_{\ell_q}$ for $p,q>0$.

\begin{theorem}\label{thm:upper}
Let $\bX$ follow an Ising model \eqref{eqn:general_ising} with $\mathbf{Q}$ such that $\|\bQ\|_{\ell_\infty\to \ell_\infty}=O(1)$. Consider testing hypotheses about $\bmu$ as described by \eqref{eqn:hypo_sparse}. If $s\tanh(B)\gg n^{1/2}$, then the test that rejects $H_0$ if and only if $\sqrt{n}\tilde{X}\ge L_n$ for any $L_n\to\infty$ such that $L_n=o(n^{-1/2}s\tanh(B))$ is asymptotically powerful.
\end{theorem}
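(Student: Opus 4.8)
The plan is to bound the Type I error of $T$ under $\P_{\bQ,\mathbf{0}}$ and the worst-case Type II error $\sup_{\bmu\in\Xi(s,B)}\P_{\bQ,\bmu}(T=0)$ separately, in both cases by an elementary second-moment (Chebyshev) argument; the whole point of centering by the conditional mean is to render the first and second moments of $n\tilde X$ tractable. Write $n\tilde X=\sum_{i=1}^n g_i$ with $g_i:=X_i-\tanh(m_i(\bX))$. Since $\tanh(m_i(\bX))=\E_{\bQ,\mathbf{0}}(X_i\mid X_k:k\ne i)$ and $\mathbf{Q}$ is hollow, $g_i$ is centered given $\{X_k:k\ne i\}$ under $H_0$; in particular $\E_{\bQ,\mathbf{0}}(n\tilde X)=0$.

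The main step is the variance bound $\mathrm{Var}_{\bQ,\mathbf{0}}(n\tilde X)=O(n)$. The diagonal contribution is $O(n)$ because $|g_i|\le 2$ (indeed $\E_{\bQ,\mathbf{0}}g_i^2=\E_{\bQ,\mathbf{0}}\sech^2(m_i(\bX))\le 1$). For $i\ne j$ I would condition on $\{X_k:k\ne i\}$: writing $m_j(\bX)=Q_{ij}X_i+a$ with $a:=\sum_{k\notin\{i,j\}}Q_{jk}X_k$ (which, like $X_j$, is measurable w.r.t. $\{X_k:k\ne i\}$) and using $\P_{\bQ,\mathbf{0}}(X_i=1\mid X_k:k\ne i)=\tfrac12(1+\tanh m_i(\bX))$, a two-point average over $X_i$ gives
$$
\E_{\bQ,\mathbf{0}}\big[g_ig_j\mid X_k:k\ne i\big]=-\tfrac12\,\sech^2\!\big(m_i(\bX)\big)\Big[\tanh(a+Q_{ij})-\tanh(a-Q_{ij})\Big],
$$
whose absolute value is at most $|Q_{ij}|$ since $\tanh$ is $1$-Lipschitz and $\sech^2\le1$. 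Hence $|\mathrm{Cov}_{\bQ,\mathbf{0}}(g_i,g_j)|\le|Q_{ij}|$, and summing over $i\ne j$ gives $\sum_{i\ne j}|Q_{ij}|\le n\,\|\bQ\|_{\ell_\infty\to\ell_\infty}=O(n)$. Chebyshev then yields $\P_{\bQ,\mathbf{0}}(\sqrt n\tilde X\ge L_n)=O(L_n^{-2})\to 0$.

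For the power, fix $\bmu\in\Xi(s,B)$ and decompose $g_i=h_i+d_i$ with $h_i:=X_i-\tanh(m_i(\bX)+\mu_i)$ and $d_i:=\tanh(m_i(\bX)+\mu_i)-\tanh(m_i(\bX))\ge 0$. Under $\P_{\bQ,\bmu}$ the conditional mean of $X_i$ is $\tanh(m_i(\bX)+\mu_i)$, so $h_i$ is conditionally centered and the computation above (with $m_i(\bX)$ replaced by $m_i(\bX)+\mu_i$ and $a$ by $a+\mu_j$) still gives $\mathrm{Var}_{\bQ,\bmu}(\sum_i h_i)=O(n)$ with implied constant depending only on $\|\bQ\|_{\ell_\infty\to\ell_\infty}$. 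The key observation is that $\sum_i d_i$ has a deterministic lower bound: since $\mathbf{Q}$ is hollow and $\|\bQ\|_{\ell_\infty\to\ell_\infty}=:C<\infty$, we have $|m_i(\bX)|\le C$ surely, and for $i\in\mathrm{supp}(\bmu)$ a short calculation---minimizing $\tanh(m+\mu_i)-\tanh(m)=\sinh\mu_i/[\cosh(m+\mu_i)\cosh m]$ over $|m|\le C$ and using $\cosh(C+\mu_i)\le e^C\cosh\mu_i$---gives $d_i\ge e^{-2C}\tanh(\mu_i)\ge e^{-2C}\tanh(B)$. Thus $n\tilde X\ge\sum_i h_i+e^{-2C}s\tanh(B)$ surely, so
$$
\P_{\bQ,\bmu}\!\big(\sqrt n\tilde X< L_n\big)\le \P_{\bQ,\bmu}\!\Big(\big|\tfrac{1}{\sqrt n}\textstyle\sum_i h_i\big|> e^{-2C}n^{-1/2}s\tanh(B)-L_n\Big),
$$
and since $L_n=o(n^{-1/2}s\tanh B)$ while $\mathrm{Var}_{\bQ,\bmu}(n^{-1/2}\sum_i h_i)=O(1)$, Chebyshev bounds the right-hand side by $O\big(n/(s\tanh B)^2\big)=o(1)$, uniformly over $\bmu\in\Xi(s,B)$ because every constant depends only on $C$. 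Combining the two estimates gives $\risk(T,\Xi(s,B),\bQ)\to0$.

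I expect the only real obstacle to be the off-diagonal covariance bound $|\mathrm{Cov}(g_i,g_j)|\le|Q_{ij}|$: this is the step where conditioning on $\{X_k:k\ne i\}$ fixes $m_i(\bX)$ but still leaves a genuine two-point average over $X_i$ inside $g_j$, and it is precisely here that the conditional-mean centering and the hypothesis $\|\bQ\|_{\ell_\infty\to\ell_\infty}=O(1)$ combine to tame the dependence; everything else (the sure bound $|m_i(\bX)|\le C$, the elementary lower bound on $d_i$, and the two Chebyshev steps) is routine. In particular the method of exchangeable pairs is not needed for this direction, since $n\tilde X$ is already an explicit sum of conditionally centered increments plus a deterministically positive drift.
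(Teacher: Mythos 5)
Your proof is correct and takes a genuinely different route from the paper. The paper relies on Lemma \ref{lemma_chatterjee}, an exponential concentration inequality for $f_{\bQ,\bmu}(\bX)$ derived via Stein's method of exchangeable pairs, and then applies it twice (once under the null, once under the alternative after peeling off the deterministic drift). You replace this machinery with a direct second-moment computation: conditioning on $\{X_k:k\ne i\}$ to compute $\E[g_ig_j]$ exactly as a two-point average, showing $|\mathrm{Cov}(g_i,g_j)|\le|Q_{ij}|$, and then using $\|\bQ\|_{\ell_\infty\to\ell_\infty}=O(1)$ to get $\mathrm{Var}(n\tilde X)=O(n)$. The algebra checks out (in particular the identity $\E[g_ig_j\mid X_k:k\ne i]=-\tfrac12\sech^2(m_i)[\tanh(a+Q_{ij})-\tanh(a-Q_{ij})]$ and its $\bmu$-shifted analogue for $h_i$, as does the drift bound $d_i\ge e^{-2C}\tanh(\mu_i)$ via $\tanh(m+\mu)-\tanh(m)=\sinh\mu/[\cosh(m+\mu)\cosh m]$). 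The drift decomposition $n\tilde X=\sum_i h_i+\sum_i d_i$ is the same one used in the paper (there written as $\tilde X=f_{\bQ,\bmu}(\bX)+(\tilde X-f_{\bQ,\bmu}(\bX))$), and both proofs lower-bound the drift deterministically using $|m_i(\bX)|\le\|\bQ\|_{\ell_\infty\to\ell_\infty}$; you get the constant $e^{-2C}$ where the paper gets $1-\tanh(C)$, but both are of the same nature. What your approach buys: it is entirely elementary, avoiding Stein's method. What it gives up: Chebyshev only yields $O(L_n^{-2})$ and $O(n/(s\tanh B)^2)$ tail bounds rather than exponential ones, which is enough for this theorem but not for other uses the paper has for Lemma \ref{lemma_chatterjee} (e.g., the proof of Theorem \ref{thm:z1_unstructured} invokes the explicit exponential tail).
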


The condition $\|\bQ\|_{\ell_\infty\to \ell_\infty}=O(1)$ is a regularity condition which holds for many common examples of the Ising model in the literature. In particular, $\mathbf{Q}$ oftentimes can be associated with a certain graph $\mathcal{G}=(V,E)$ with vertex set $V=[n]:=\{1,\ldots,n\}$ and edge set $E \subseteq [n]\times [n]$ so that $\mathbf{Q}=(n\theta) G/(2|E|)$, where $G$ is the adjacency matrix for $\mathcal{G}$, $|E|$ is the cardinality of $E$, and $\theta \in \mathbb{R}$ is a parameter independent of $n$ deciding the degree of dependence in the spin-system. Below we provide several more specific examples that are commonly studied in the literature.
\paragraph{Dense Graphs:}  Recall that
\begin{align*}
\|\bQ\|_{\ell_\infty\to\ell_\infty}=\max_{1\le i\le n}\sum_{j=1}^n |Q_{ij}|\leq \frac{n^2|\theta|}{2|E|}.
\end{align*}
If the dependence structure is guided by densely labeled graphs so that $|E|=\Theta(n^2)$, then $\|\bQ\|_{\ell_\infty\to \ell_\infty}=O(1)$.
\paragraph{Regular Graphs:} When the dependence structure is guided by a regular graph of degree $d_n$,  we can write $\mathbf{Q}=\theta G/d_n$. Therefore,
\begin{align*}
\|\bQ\|_{\ell_\infty\to\ell_\infty}=\max_{1\le i\le n}\sum_{j=1}^n |Q_{ij}|=\frac{|\theta|}{d_n}\cdot d_n=|\theta|,
\end{align*}
and again obeying the condition $\|\bQ\|_{\ell_\infty\to \ell_\infty}=O(1)$.
\paragraph{Erd\"{o}s-R\'{e}nyi Graphs:} Another example is the Erd\"{o}s-R\'{e}nyi graph where an edge between each pair of nodes is present with probability $p_n$ independent of each other. It is not hard to derive from Chernoff bound and union bounds that the maximum degree $d_{\max}$ and the totally number of edges $|E|$ of an Erd\"{o}s-R\'{e}nyi graph satisfy with high probability:
$$
d_{\max}\le np_n(1+\delta),\qquad {\rm and}\qquad |E|\ge {n(n-1)\over 2}p_n\cdot(1-\delta)
$$
for any $\delta\in (0,1)$, provided that $np_n\gg \log{n}$. This immediately implies that $\|\bQ\|_{\ell_\infty\to\ell_\infty}=O_p(1)$.
\medskip

In other words, the detection limit established in Theorem \ref{thm:upper} applies to all these types of Ising models. In particular, it suggests that, under Curie-Weiss model, the $\sqrt{n}\tilde{X}$ based test can detect sparse external magnetization $\bmu\in \Xi(s,B)$ if $s\tanh(B)\gg n^{1/2}$, for any $\theta\in {\mathbb R}$, which, in the light of Theorems \ref{thm:curie} and \ref{thm:curie1}, is optimal in both high and low temperature states.

\subsection{Optimality}\label{section:lower_bound}
The detection limit presented in Theorem \ref{thm:upper} matches those obtained for independent Bernoulli sequence model. It is of interest to understand to what extent the upper bounds in Theorem \ref{thm:upper} are sharp. The answer to this question might be subtle. In particular, as we see in the Curie-Weiss case, the optimal rates of detection thresholds depend on the presence of thermodynamic phase transition in the null model. To further illustrate the role of criticality, we now consider an example of the Ising model without phase transition and the corresponding behavior of the detection problem \eqref{eqn:hypo_sparse} in that case. Let
$$\mathbf{Q}_{i,j}=\frac{\theta}{2} {\mathbb I}\{|i-j|=1\mod n\}$$
so that the corresponding Ising model can be identified with a cycle graph of length $n$. Our next result shows that the detection threshold remains the same for any $\theta$, and is the same as the independent case i.e. $\theta=0$.

\begin{theorem}\label{thm:z1_unstructured}
	Suppose $\bX\sim \P_{\mathbf{Q},\bmu}$, where $\mathbf{Q}$ is the scaled adjancency matrix of the cycle graph of length $n$, that is, $\mathbf{Q}_{i,j}=\frac{\theta}{2} 1\{|i-j|=1\mod n\}$ for some $\theta \in \mathbb{R}$. If $s\tanh(B)\le C\sqrt{n}$ for some $C>0$, then no test is asymptotically powerful for the testing problem (\ref{eqn:hypo_sparse}).
\end{theorem}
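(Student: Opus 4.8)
The plan is to use Le Cam's method through a second‑moment (chi‑square) bound. Write $\P_0:=\PJzero$ and $\E_0:=\EJzero$. Since $\{B\mathbf{1}_S:\ S\subseteq[n],\ |S|=s\}\subseteq\Xi(s,B)$, it suffices to establish the lower bound for this subfamily; put the uniform prior $\pi$ on it, let $\P_S:=\P_{\bQ,B\mathbf{1}_S}$, $\P_\pi:=\E_{S\sim\pi}\P_S$, and $L:=d\P_\pi/d\P_0$. For every test $T$ one has $\risk(T,\Xi(s,B),\bQ)\ge \P_0(T=1)+\P_\pi(T=0)\ge 1-\|\P_0-\P_\pi\|_{\mathrm{TV}}$, and a Paley--Zygmund argument ($\P_0(L\ge 1/2)\ge 1/(4\E_0[L^2])$ from $\E_0[L]=1$ and Cauchy--Schwarz) yields $\|\P_0-\P_\pi\|_{\mathrm{TV}}\le 1-1/(8\E_0[L^2])$. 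Hence $\liminf_n\risk>0$ — i.e.\ no test is asymptotically powerful — as soon as $\E_0[L^2]=O(1)$, the implied constant being allowed to depend on $C$ and $\theta$.

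The second moment reduces to a correlation computation under the null. Dividing the Ising likelihood ratio through by $(\cosh B)^s$ and using $e^{Bx_i}=\cosh B\,(1+x_i\tanh B)$ gives $\frac{d\P_S}{d\P_0}(\bx)=g_S(\bx)/\E_0[g_S]$ with $g_S(\bx):=\prod_{i\in S}(1+x_i t)$ and $t:=\tanh B\in[0,1)$, so that
\[
\E_0[L^2]=\E_{S,S'\sim\pi\otimes\pi}\big[R(S,S')\big],\qquad R(S,S'):=\frac{\E_0[g_S g_{S'}]}{\E_0[g_S]\,\E_0[g_{S'}]}=\frac{Z(\bQ,\mathbf{0})\,Z(\bQ,B(\mathbf{1}_S+\mathbf{1}_{S'}))}{Z(\bQ,B\mathbf{1}_S)\,Z(\bQ,B\mathbf{1}_{S'})},
\]
equivalently $R(S,S')=\E_{\P_S}[g_{S'}]/\E_{\P_0}[g_{S'}]$, a normalized mixed moment of the zero‑field model.

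The crux is to control $R(S,S')$ by exploiting that, on the cycle, $\P_0$ is a stationary one‑dimensional Markov chain: conditioning on the spins at any set of sites makes the complementary arcs conditionally independent, and the transfer‑matrix identity $Z(\bQ,\bh)=\mathrm{Tr}\prod_{i=1}^n(C_0 E_i)$ — with $C_0=(e^{\theta x x'/2})_{x,x'}$ the bond matrix and $E_i=\mathrm{diag}(e^{h_i},e^{-h_i})$ — makes every partition function entering $R(S,S')$ explicit: sites outside $S\cup S'$ contribute only powers of $C_0$, and the spectral gap of $C_0$ gives exponential decay of correlations at rate $\rho:=|\tanh(\theta/2)|<1$ (together with FKG positivity, so $\E_0[g_S]\ge 1$ when $\theta\ge0$). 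A transfer‑matrix (cluster‑expansion) analysis of the resulting product — in which each excursion of the chain through the subdominant eigenspace of $C_0$, when flanked by field matrices, costs a factor $\tanh B=t$ — should yield a bound of the form
\[
R(S,S')\ \le\ \exp\Big(C_\theta\, t^2 \sum_{i\in S}\sum_{j\in S'}\rho^{\,\mathrm{dist}(i,j)}\Big),
\]
so that only indices of $S,S'$ that coincide or lie within $O(1)$ of one another inflate $R$, each by a bounded factor; for $\theta=0$ this is the exact identity $R(S,S')=(1+t^2)^{|S\cap S'|}$. For $\theta<0$ the same bound follows after the gauge change $x_i\mapsto(-1)^i x_i$ on the cycle (exact for $n$ even; for $n$ odd the single frustrated bond costs at most an $O(1)$ factor).

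It remains to average over $S,S'$. Conditioning on $S$, the indicators $(\mathbf{1}\{j\in S'\})_{j\in[n]}$ are negatively associated, so the exponential moment factorizes upward; combined with the uniform bounds $\sum_{i\in S}\rho^{\,\mathrm{dist}(i,j)}\le (1+\rho)/(1-\rho)$ and $\sum_{j\in[n]}\sum_{i\in S}\rho^{\,\mathrm{dist}(i,j)}=O_\theta(s)$, and with $t\le 1$, this gives $\E_{S'}[R(S,S')\mid S]\le\exp\big(O_\theta(1)\cdot s^2 t^2/n\big)$ uniformly in $S$, hence $\E_0[L^2]\le\exp\big(O_\theta(s^2 t^2/n)\big)$. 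Since $s\tanh B\le C\sqrt n$ forces $s^2 t^2/n\le C^2$, the second moment is $O(1)$ and the proof is complete. The main obstacle is the displayed bound on $R(S,S')$: proving it uniformly in $B$, in particular up to $B=\infty$ (where $g_S=2^s\mathbf{1}\{x_i=1\ \forall i\in S\}$ and $R(S,S')$ becomes a ratio of Ising probabilities of the same event), is where essentially all the work lies, and it is there that the one‑dimensional structure of the cycle — through the transfer‑matrix decoupling together with FKG positivity — enters decisively.
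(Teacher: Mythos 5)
Your approach is genuinely different from the paper's: you go through a second-moment ($\chi^2$) bound $\E_0[L^2]=O(1)$ under a \emph{uniform} prior on $s$-subsets, whereas the paper chooses a non-uniform prior $\pi(\bmu)\propto Z(\bQ,\bmu)$ specifically designed so that $L_\pi(\bX)$ is monotone in $\sum_i X_i$. That monotonicity collapses the Bayes test to a threshold test on the total magnetization, and the paper then (i) uses the transfer-matrix CLT on the cycle to show $\sum_i X_i=O_P(\sqrt n)$ under $H_0$, forcing any valid threshold to satisfy $L_n\gg\sqrt n$, and (ii) uses the exchangeable-pairs concentration bound (Lemma~\ref{lemma_chatterjee}) together with the observation that on a cycle $m_i(\bX)\in\{-\theta,0,\theta\}$ to show that under any $\bmu\in\widetilde\Xi(s,B)$ the sum $\sum_i X_i$ stays at scale $\sqrt n + s\tanh B$, so Type~II error cannot vanish. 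That route completely avoids the combinatorics of $\E_0[L^2]$.

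The problem with your plan is the step you yourself flag: the bound
\[
R(S,S')\ \le\ \exp\Big(C_\theta\, t^2 \sum_{i\in S}\sum_{j\in S'}\rho^{\,\mathrm{dist}(i,j)}\Big)
\]
is asserted (``should yield'') but not proven, and it is precisely the non-routine part of the argument. It is not a mere corollary of exponential decay of two-point correlations: for $|S|,|S'|>1$ the numerator $\E_0[g_S g_{S'}]$ is a sum over all subsets of $S\cup S'$ weighted by multi-spin correlations, and you need a uniform-in-$B$ (hence uniform-in-$t\in[0,1]$) factorization of that ratio into pairwise contributions. In the i.i.d.\ case this is the exact identity $R=(1+t^2)^{|S\cap S'|}$, but for $\theta\neq 0$ you need a genuine cluster-expansion/transfer-matrix argument, including the boundary case $B=\infty$ where $g_S$ degenerates to an indicator; none of that is carried out. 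The subsequent averaging over $(S,S')$ via negative association is fine \emph{given} the displayed bound, and the conclusion $\E_0[L^2]\le\exp(O_\theta(s^2t^2/n))=O(1)$ under $s\tanh B\le C\sqrt n$ would then follow. But as written this is a proof outline with the central lemma missing, not a proof. The paper's route is worth studying here exactly because it sidesteps this difficulty entirely by reducing to a one-dimensional statistic.

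Two smaller points. First, your handling of $\theta<0$ by the gauge change $x_i\mapsto(-1)^i x_i$ works cleanly only for even $n$; the ``single frustrated bond costs $O(1)$'' claim for odd $n$ again needs an argument (the paper's proof is agnostic to the sign of $\theta$ since it only uses $|m_i(\bX)|\le|\theta|$ and the explicit eigenvalues $\lambda_{1,2}$). Second, the reduction to $\{B\mathbf{1}_S\}$ is fine for the lower bound, but note the paper works with $\widetilde\Xi(s,B)$ and the $Z$-weighted prior precisely so that the resulting likelihood ratio is a function of $\bar X$ alone; your uniform prior does not have that property, which is why you are forced into the $\chi^2$ computation in the first place.
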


In view of Theorem \ref{thm:upper},  if $s\tanh(B)\gg n^{1/2}$, then the test that rejects $H_0$ if and only if $\sqrt{n}\tilde{X}\ge L_n$ for any $L_n\rightarrow\infty$ such that $L_n=o(n^{-1/2}s\tanh(B))$ is asymptotically powerful for the testing problem (\ref{eqn:hypo_sparse}). Together with Theorem \ref{thm:z1_unstructured}, this shows that for the Ising model on the cycle graph of length $n$, which is a physical model without thermodynamic phase transitions, the detection thresholds mirror those obtained in independent Bernoulli sequence problems \citep{mukherjee2015hypothesis}. 

The difference between these results and those for the Curie-Weiss model demonstrates the difficulty of a unified and complete treatment to general Ising models. We offer here, instead, a partial answer and show that the test described earlier in the section (Theorem \ref{thm:upper}) is indeed optimal under fairly general weak dependence for reasonably regular graphs. 

\begin{theorem}\label{thm:lower}
Suppose $\bX \sim \P_{\mathbf{Q},\bmu}$ as in \eqref{eqn:general_ising} and consider testing hypotheses about $\bmu$ as described by \eqref{eqn:hypo_sparse}. Assume $\mathbf{Q}_{i,j}\geq 0$ for all $(i,j)$ such that $\|\bQ\|_{\ell_\infty\to\ell_\infty}\le\rho<1$ for some constant $\rho>0$, $\|\bQ\|_{\rm F}^2=O(\sqrt{n})$, and
$$
\left\|\bQ{\mathbf 1}-\frac{{\mathbf 1}^\top \bQ{\mathbf 1}}{n}{\bf 1}\right\|^2=O(1).
$$
If $s\tanh(B)\le C\sqrt{n}$ for some constant $C>0$, then no test is asymptotically powerful for \eqref{eqn:hypo_sparse}.
\end{theorem}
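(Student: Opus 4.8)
\emph{Proof strategy.} The plan follows the classical route for detection lower bounds, with an Ising-specific second-moment estimate at its core; the exchangeable-pairs technology of \cite{chatterjee2007stein} enters repeatedly. Let $\pi$ be the prior that draws $S\subseteq[n]$ uniformly among subsets of size $s$ and sets $\bmu=B\mathbf{1}_S$. Then $\mathrm{supp}(\bmu)$ has size exactly $s$, so $\pi$ is supported on $\Xi(s,B)$, and writing $\bar\P_\pi:=\E_{S\sim\pi}\P_{\bQ,B\mathbf{1}_S}$ it suffices to prove $\limsup_n\|\P_{\bQ,\mathbf{0}}-\bar\P_\pi\|_{\mathrm{TV}}<1$, which forces $\liminf_n\mathrm{Risk}>0$ for every test. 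Put $\psi(\bnu):=\log\E_{\bQ,\mathbf{0}}[e^{\bnu^\top\bX}]$, so that $\tfrac{d\P_{\bQ,B\mathbf{1}_S}}{d\P_{\bQ,\mathbf{0}}}(\bx)=e^{-\psi(B\mathbf{1}_S)}e^{B\sum_{i\in S}x_i}$ and the likelihood ratio of $\bar\P_\pi$ against $\P_{\bQ,\mathbf{0}}$ is $L(\bx)=\E_{S\sim\pi}\big[e^{-\psi(B\mathbf{1}_S)}e^{B\sum_{i\in S}x_i}\big]$; a direct computation gives
\[
\E_{\bQ,\mathbf{0}}[L^2]=1+\chi^2\big(\bar\P_\pi\,\big\|\,\P_{\bQ,\mathbf{0}}\big)=\E_{S,S'}\exp\big(D(S,S')\big),
\]
where $D(S,S'):=\psi(B\mathbf{1}_S+B\mathbf{1}_{S'})-\psi(B\mathbf{1}_S)-\psi(B\mathbf{1}_{S'})$ and $S,S'$ are independent copies from $\pi$.

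\emph{The key estimate.} As $\psi$ is convex with $\nabla^2\psi(\bnu)=\mathrm{Cov}_{\bQ,\bnu}(\bX)$ (the covariance under the tilted Ising measure), writing $D$ as a mixed second difference gives
\[
D(S,S')=B^2\int_0^1\!\!\int_0^1 \mathbf{1}_S^\top\,\mathrm{Cov}_{\bQ,\,uB\mathbf{1}_S+vB\mathbf{1}_{S'}}(\bX)\,\mathbf{1}_{S'}\,du\,dv .
\]
Here the field $\bnu=uB\mathbf{1}_S+vB\mathbf{1}_{S'}\ge\mathbf{0}$ and $\bQ\ge\mathbf{0}$, so one is looking at a ferromagnet with a nonnegative field. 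Using $\mathrm{Cov}_{\bQ,\bnu}(X_i,X_j)=\mathrm{Cov}_{\bQ,\bnu}\big(X_i,\tanh(\nu_j+\sum_kQ_{jk}X_k)\big)$ for $i\neq j$, the contraction of $\tanh$, the Dobrushin-type hypothesis $\|\bQ\|_{\ell_\infty\to\ell_\infty}\le\rho<1$, the Cauchy--Schwarz/saturation inequality $\mathrm{Var}_{\bQ,\bnu}(X_i)\le\sech^2((\nu_i-\rho)_+)$, and the GHS/Griffiths correlation inequalities (legitimate because $\bQ,\bnu\ge\mathbf{0}$), I would establish a pointwise bound of the form
\[
0\le \mathrm{Cov}_{\bQ,\bnu}(X_i,X_j)\le c_\rho\,\sech\big((\nu_i-\rho)_+\big)\,\sech\big((\nu_j-\rho)_+\big)\,\big[(I-\bQ)^{-1}\bQ\big]_{ij}\qquad(i\neq j).
\]
Since $i\in S$ forces $\nu_i\ge uB$ and $j\in S'$ forces $\nu_j\ge vB$, and since $B^2\int_0^1\!\int_0^1\sech^2((u+v)B)\,du\,dv=\log(1+\tanh^2B)\le\tanh^2B$ while $\big(\int_0^1 B\,\sech(uB)\,du\big)^2=(\arctan\sinh B)^2\le\tfrac{\pi^2}{4}\tanh^2B$, plugging these into the integral yields, \emph{uniformly in} $B$,
\[
D(S,S')\ \le\ c_1(\rho)\,\tanh^2(B)\,|S\cap S'|\ +\ c_2(\rho)\,\tanh^2(B)\,\mathbf{1}_S^\top R\,\mathbf{1}_{S'},\qquad R:=(I-\bQ)^{-1}\bQ.
\]

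\emph{Assembling, and the boundary.} Now $|S\cap S'|$ is hypergeometric with mean $s^2/n$, so after the $\tanh^2B$ scaling its exponential moments of each fixed order are governed by $(s\tanh B)^2/n\le C^2$ via the hypergeometric moment generating function. For the bilinear form, decompose $\mathbf{1}_S=\tfrac sn\mathbf{1}+\eta_S$ with $\E\eta_S=\mathbf{0}$: the rank-one piece contributes a deterministic $\tfrac{s^2}{n^2}\mathbf{1}^\top R\mathbf{1}=O(s^2/n)$ (using $\mathbf{1}^\top R\mathbf{1}=O(n)$, which follows from $R\mathbf{1}=\tfrac{\bar q}{1-\bar q}\mathbf{1}+O(1)$ in $\ell_2$, $\bar q:=n^{-1}\mathbf{1}^\top\bQ\mathbf{1}\in[0,\rho]$, by the near-constant row-sum hypothesis), which under the $\tanh^2B$ scaling is $O((s\tanh B)^2/n)=O(C^2)$; the cross terms are lower order using the same row-sum bound; and the quadratic remainder $\eta_S^\top R\eta_{S'}$ has variance $\lesssim\tfrac{s^2}{n^2}\|R\|_{\mathrm F}^2=O(s^2/n^{3/2})$ by $\|\bQ\|_{\mathrm F}^2=O(\sqrt n)$, so its scaled exponential moments are $1+o(1)$ by a Bernstein bound. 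Hence $\E_{\bQ,\mathbf{0}}[L^2]=O(1)$, and in fact $1+o(1)$ when $s\tanh(B)=o(\sqrt n)$, which already gives $\|\P_{\bQ,\mathbf{0}}-\bar\P_\pi\|_{\mathrm{TV}}\to0$. At the genuine boundary $s\tanh(B)\asymp\sqrt n$ the second moment is only a (possibly large) constant, so I would instead pass to the total magnetization $M:=\sum_iX_i$: when $\bQ$ is vertex transitive $\psi(B\mathbf{1}_S)$ is constant in $S$, whence $L$ is an increasing function of $M$ and $\|\P_{\bQ,\mathbf{0}}-\bar\P_\pi\|_{\mathrm{TV}}=\|\mathrm{law}_{\mathbf{0}}(M)-\mathrm{law}_{\bar\pi}(M)\|_{\mathrm{TV}}$, and the right side is bounded away from $1$ because $M$ concentrates at scale $\sqrt n$ under the null (exchangeable pairs, $\|\bQ\|_{\ell_\infty\to\ell_\infty}<1$) while under $\bar\P_\pi$ its mean merely shifts by $\asymp(1-\bar q)^{-1}s\tanh(B)=O(\sqrt n)$. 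For general $\bQ$, the spread of $\psi(B\mathbf{1}_S)$ over $|S|=s$ is controlled through the integral identity by $\|\bQ\|_{\mathrm F}^2=O(\sqrt n)$ and the row-sum hypothesis, and the resulting deviation of $L$ from a function of $M$ is absorbed.

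\emph{Main obstacle.} The crux is the pointwise covariance bound in the second step: an estimate of $\mathrm{Cov}_{\bQ,\bnu}(X_i,X_j)$ that is simultaneously summable in the off-diagonal direction against a matrix whose Frobenius norm and row sums match the hypotheses, and that carries the saturation factors $\sech((\nu_i-\rho)_+)\sech((\nu_j-\rho)_+)$ needed to keep the $B$-dependence uniform --- without these factors the large-$B$, small-$s$ corner is uncontrolled, and without the decay factor $[(I-\bQ)^{-1}\bQ]_{ij}$ the bilinear form grows like $s$ rather than like $|S\cap S'|$. This is precisely where all four hypotheses on $\bQ$ enter, and it explains why the sharper, phase-transition-sensitive behavior of the Curie--Weiss model does not survive at this level of generality.
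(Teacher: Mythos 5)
Your plan is a second–moment (chi-squared) argument: you choose a uniform prior over $B\mathbf{1}_S$, reduce to bounding $\E_{S,S'}\exp(D(S,S'))$ where $D$ is a mixed second difference of the log-partition function, and you then need a pointwise covariance decay estimate of the form $\mathrm{Cov}_{\bQ,\bnu}(X_i,X_j)\lesssim \sech((\nu_i-\rho)_+)\sech((\nu_j-\rho)_+)[(I-\bQ)^{-1}\bQ]_{ij}$. This is a genuinely different route from the paper. The paper never touches $\chi^2$: it takes $\pi(\bmu)\propto Z(\bQ,\bmu)$ over $\widetilde\Xi(s,B)$, so that the partition functions cancel in the likelihood ratio and $L_\pi(\bX)\propto\E_S\exp(B\sum_{i\in S}X_i)$ becomes a symmetric, increasing function of $\bar X$ \emph{for every} $\bQ$. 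The lower bound then reduces to two magnetization estimates: anti-concentration of $\sum X_i$ under $H_0$ at scale $\sqrt n$ (via an exponential tilt $\bmu(t)=tn^{-1/2}\mathbf 1$, Holley's inequality, and monotonicity of the log-partition function) and concentration of $\sum X_i$ under the alternative at scale $\sqrt n$ (via the conditional-mean decomposition, Lemma \ref{lemma_chatterjee}, Lemma \ref{lem:step1}, and a separate conditioning argument for $B>1$). You glimpse exactly this mechanism when you observe that $L$ is monotone in $M$ when $\psi(B\mathbf 1_S)$ is constant in $S$, but you only get it under vertex transitivity; by putting $Z(\bQ,\bmu)$ into the prior weights, the paper makes that monotonicity unconditional, which is the trick you were missing.

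The genuine gap in your proposal is the covariance bound, which you yourself flag as ``the main obstacle.'' It is not a small lemma: a uniform-in-field bound of the stated form, with the saturation factors $\sech((\nu_i-\rho)_+)$ and a decay kernel $[(I-\bQ)^{-1}\bQ]_{ij}$ tight enough to be summed against the row-sum and Frobenius hypotheses, is itself a nontrivial piece of Ising correlation-decay theory, and without it every step downstream (the passage from $D(S,S')$ to $|S\cap S'|$ plus a bilinear remainder, the hypergeometric mgf bound, the $\|R\|_{\rm F}$ control) is conditional. The paper's Lemma \ref{lem:step1} is a much weaker statement --- a single trace-like moment bound $\E_{\bQ,\bmu}\sum_i m_i(\bX)^2=O(\sqrt n)$, proved by a Gaussian comparison and exponential-family convexity --- precisely because the monotone-likelihood-ratio reduction means one never needs pointwise pairwise correlations. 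Separately, your worry at the boundary $s\tanh(B)\asymp\sqrt n$ is a false alarm: if $\E_0[L^2]=O(1)$ with $\E_0[L]=1$, then $\E_0[\min(L,1)]\ge\theta\P_0(L\ge\theta)\ge\theta(1-\theta)^2/\E_0[L^2]$ by Paley--Zygmund, so $\|\P_0-\bar\P_\pi\|_{\rm TV}=1-\E_0[\min(L,1)]$ stays bounded away from $1$; the excursion into total magnetization and vertex transitivity is unnecessary \emph{if} the second-moment bound were in hand. As it stands, your proposal is a plausible alternative strategy, but it rests on an unproved and substantial correlation-decay estimate, whereas the paper's argument avoids it entirely.
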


Theorem \ref{thm:lower} provides rate optimal lower bound to certain instances pertaining to Theorem \ref{thm:upper}. One essential feature of Theorem \ref{thm:lower} is the implied impossibility result for the $s \ll \sqrt{n}$ regime. More precisely, irrespective of signal strength, no tests are asymptotically powerful when the number of signals drop below $\sqrt{n}$ in asymptotic order. This is once again in parallel to results in \cite{mukherjee2015hypothesis}, and provides further evidence that low dependence/high temperature regimes $(\text{as encoded by }\ \|\bQ\|_{\ell_\infty\to\ell_\infty}\le\rho<1)$ resemble independent Bernoulli ensembles. Theorem \ref{thm:lower} immediately implies the optimality of the conditional mean centered tests for a couple of common examples.

\paragraph{High Degree Regular Graphs:} When the dependence structure is guided by a regular graph, that is $\mathbf{Q}=\frac{\theta}{d_n}G$, it is clear that
$$
\left\|\bQ{\mathbf 1}-\frac{{\mathbf 1}^\top \bQ{\mathbf 1}}{n}{\bf 1}\right\|^2=0.
$$
If $0\leq \theta<1$ and $d_n\gtrsim \sqrt{n}$, then one can easily verify the conditions of Theorem \ref{thm:lower} since
$$\|\mathbf{Q}\|_{\ell_\infty\to \ell_\infty}=\theta<1,\qquad{\rm and}\qquad \|\bQ\|_{\rm F}^2=n\theta^2/d_n.$$
\paragraph{Dense Erd\"{o}s-R\'{e}nyi Graphs:}
When the dependence structure is guided by a Erd\"{o}s-R\'{e}nyi graph on $n$ vertices with parameter $p_n$, that is $\mathbf{Q}=\theta/(np_n)G$ with $G_{i,j}\sim \mathrm{Bernoulli}(p_n)$ independently for all $1\le i<j\le n$, we can also verify that the conditions of Theorem \ref{thm:lower} holds with probability tending to one if $0\leq \theta<1$ and $p_n$ bounded away from $0$. As before, by Chernoff bounds, we can easily derive that with probability tending to one,
$$
\|\bQ\|_{\ell_\infty\to\ell_\infty}=\theta\frac{d_{\max}}{np_n}\leq \frac{\theta(1+\delta)np_n}{np_n}=\theta(1+\delta),
$$
and
$$
\|\bQ\|_{\rm F}^2={\theta^2\over n^2p_n^2}\sum_{1\le i<j\le n}G_{i,j}\le {\theta^2(1+\delta)n(n-1)p_n\over 2n^2p_n^2}\le {\theta^2\over 2p_n}(1+\delta),
$$
for any $\delta>0$. Finally, denote by $d_i$ the degree of the $i$th node, then
$$
\left\|\bQ{\mathbf 1}-\frac{{\mathbf 1}^\top \bQ{\mathbf 1}}{n}{\bf 1}\right\|^2=\frac{\theta^2}{n^2p_n^2}\sum_{i=1}^n\left(d_i-\frac{1}{n}\sum_{j=1}^n d_j\right)^2\leq \frac{\theta^2}{n^2p_n^2}\sum_{i=1}^n\left(d_i-(n-1)p_n\right)^2=O_p(1),
$$
by Markov inequality and the fact that
$$
\E\left[\sum_{i=1}^n\left(d_i-(n-1)p_n\right)^2\right]=n(n-1)p_n(1-p_n).$$
\medskip

\section{Simulation Results}
\label{section:simulation}

We now present results from a set of numerical experiments to further demonstrate the behavior of the various tests in finite samples. To fix ideas, we shall focus on the Curie-Weiss model since it exhibits the most interesting behavior in terms of the effect of thermodynamic phase transitions reflecting itself on the detection thresholds for the presence of sparse magnetization. In order to demonstrate the detection thresholds cleanly in the simulation, we parametrized sparsity $s$ as $s=n^{1-\alpha}$ for $\alpha\in (0,1)$. In this parametrization, the theoretical detection thresholds obtained for the Curie-Weiss model can be restated as follows. For $\theta \neq 1$, Theorem \ref{thm:curie} and Theorem \ref{thm:curie1} suggest that the critical signal strength equals $\tanh(B)\sim n^{-(\frac{1}{2}-\alpha)}$. In particular if $\tanh(B)=n^{-r}$, then no test is asymptotically powerful when $r>\frac{1}{2}-\alpha$; whereas the test based on conditionally centered magnetization is asymptotically powerful when $r<\frac{1}{2}-\alpha$. Moreover, for $\alpha>{1}/{2}$, all tests are asymptotically powerless irrespective of the amount of signal strength.  
However $\theta =1$, Theorem \ref{thm:corr} demonstrates that the critical signal strength equals $\tanh(B)\sim n^{-(\frac{3}{4}-\alpha)}$. In particular if $\tanh(B)=n^{-r}$, then no test is asymptotically powerful when $r>\frac{3}{4}-\alpha$; whereas the test based on total  magnetization is asymptotically powerful when $r<\frac{3}{4}-\alpha$. Moreover, for $\alpha>{3}/{4}$, all tests are asymptotically powerless irrespective of the amount of the signal strength. The simulation presented below is designed to capture the different scenarios where non-trivial detection is possible i.e. $\alpha\leq {1}/{2}$ for $\theta\neq 1$ and $\alpha\leq {3}/{4}$ for $\theta=1$.

We evaluated the power of the two tests, based on total magnetization and the conditionally centered magnetization respectively, at the significance level of $5\%$ and sample size $n=1000$. We generated the test statistics $500$ times under the null and take the $95\%$-quantile as the critical value. The power against different alternatives are then obtained empirically from $500$ repeats each. The simulation from a Curie-Weiss model in the presence of magnetization is done using the Gaussian trick or the auxiliary variable approach as demonstrated by Lemma \ref{lem:bayesian}. In particular for a given $\theta$ and $\bmu$ in the simulation parameter set, we generated a random variable $Z$ (using package \verb+rstan+ in \verb+R+) with density proportional to $f_{n,\bmu}(z):=\frac{n\theta z^2}{2}-\sum_{i=1}^n\log\cosh(\theta z+\mu_i)$. Next, given this realization of $Z=z$ we generated each component of $\bX=(X_1,\ldots,X_n)$ independently taking values in $\pm 1$ with 
$\Ptheta(X_i=x_i)=\frac{e^{(\mu_i+z\theta)x_i}}{e^{\mu_i+z\theta}+e^{-\mu_i-z\theta}}$. Thereafter Lemma \ref{lem:bayesian} guarantees the joint distribution of $\bX$ indeed follows a Curie-Weiss model with temperature parameter $\theta$ and magnetization $\bmu$. We believe that this method is much faster than the one-spin at a time Glauber dynamics which updates the whole chain $\bX$ one location at a time. We have absorbed all issues regarding mixing time in the simulation of $Z$, which being a one dimensional continuous random variable behaves much better in simulation.

In figure \ref{fig:dense_signal}, we plot the power of both tests for $\theta=0.5$ (high temperature, conditionally centered magnetization), $\theta=1$ (critical temperature, total magnetization), and $\theta=1.5$ (low temperature, conditionally centered magnetization). Each plot was produced by repeating the experiment for a range of equally spaced signal sparsity-strength pairs $(\alpha,r)$ with an increment of size $0.05$. In addition, we plot in red the theoretical detection boundary given by $r={1}/{2}-\alpha$ for non-critical temperature ($\theta\neq 1$) and $r={3}/{4}-\alpha$ for critical temperature ($\theta=1$). These simulation results agree very well with our theoretical development. 


\begin{figure}
	\begin{center}
		\includegraphics[width=7 cm,keepaspectratio]{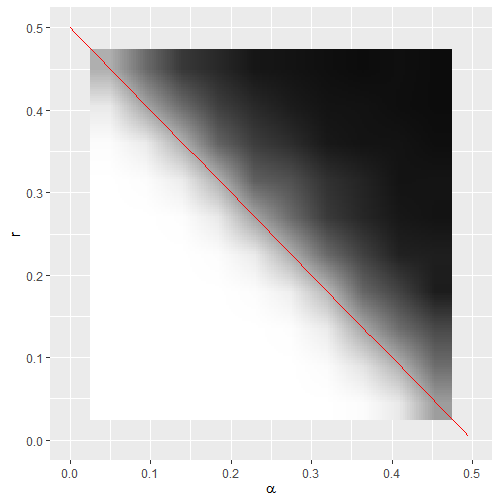}
		\includegraphics[width=7 cm,keepaspectratio]{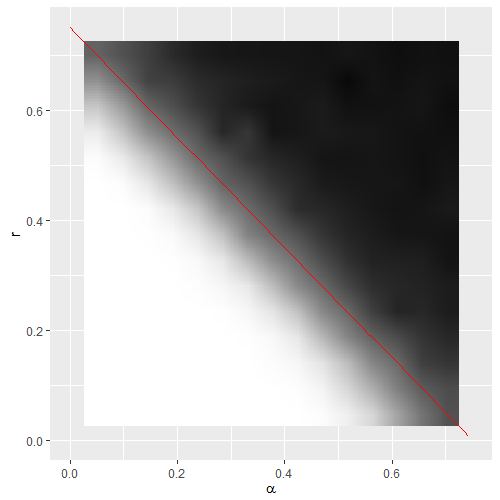}
		\includegraphics[width=7 cm,keepaspectratio]{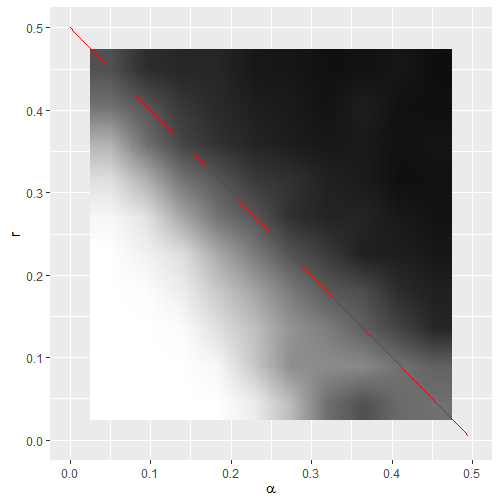}
		\caption{The power of testing procedures in the dense signal setup. (a) shows the power of the conditionally centered magnetization test for $\theta=0.5$, (b) shows the power of the total magnetization test for $\theta=1$ (c) shows the power of the conditionally centered magnetization test for $\theta=1.5$ \label{fig:dense_signal}. The theoretical detection threshold is drawn in red.}
	\end{center}
\end{figure}


\section{Discussions}\label{section:discussion}
In this paper we study the asymptotic minimax rates of detection for arbitrary sparse signals in Ising Models, considered as a framework to study dependency structures in binary outcomes. We show that the detection
thresholds in Ising models might depend on the presence of a ``thermodynamic" phase transition in the model. In the context of a Curie-
Weiss Ising model, the presence of such a phase transition results in
substantial faster rates of detection of sparse signals at criticality. On
the other hand, lack of such phase transitions, in the Ising model on
the line graph, yields results parallel to those in independent Bernoulli
sequence models, irrespective of the level of dependence. We further show that for Ising models defined on graphs enjoying certain degree of regularity, detection thresholds parallel those in independent Bernoulli
sequence models in the low dependence/high temperature regime. It will be highly interesting to consider other kinds of graphs left out by Theorem \ref{thm:lower} in the context of proving matching lower bounds to Theorem \ref{thm:upper}. This seems highly challenging and might depend heavily on the sharp asymptotic behavior of the partition function of more general Ising model under low-magnetization regimes. The issue of unknown dependency structure $\bQ$, and especially the estimation of unknown temperature parameter $\theta$ for Ising models defined on given underlying graphs, is also subtle  as shown in \cite{bhattacharya2015inference}. In particular, the rate of consistency of an estimator of $\theta$ under the null model (i.e. $\bmu=0$) depends crucially on the position of $\theta$ with respect to the point of criticality and in particular high temperature regimes (i.e. low positive values of $\theta$) may preclude the existence of any consistent estimator. The situation becomes even more complicated in presence of external magnetization (i.e. $\bmu\neq 0$). Finally, this paper opens up several
interesting avenues of future research. In particular, investigating the effect of dependence on detection of segment type structured signals deserves special attention.

\section{Proof of Main Results}\label{section:technical_details}
In this section we collect the proofs of our main results. It is convenient to first prove the general results, namely the upper bound given by Theorem \ref{thm:upper} and lower bound by Theorem \ref{thm:lower}, and then consider the special cases of the Ising model on a cycle graph, and Curie-Weiss model.

\subsection{Proof of Theorem \ref{thm:upper}}

The key to the proof is the tail behavior of
$$
f_{\bQ,\bmu}(\bX):=\frac{1}{n}\sum_{i=1}^n \left[X_i-\E_{\bQ,\bmu}(X_i|X_j: j\neq i)\right]=\frac{1}{n}\sum_{i=1}^n \left[X_i-\tanh\left(\sum_{j\neq i}\bQ_{ij}X_j+\mu_j\right)\right],
$$
where $\E_{\bQ,\bmu}$ means the expectation is taken with respect to the Ising model (\ref{eqn:general_ising}). In particular, we shall make use of the following concentration bound for $f_{\bQ,\bmu}(\bX)$.

\begin{lemma}\label{lemma_chatterjee}
Let $\bX$ be a random vector following the Ising model \eqref{eqn:general_ising}. Then for any $t>0$,
$$
\PJ( |f_{\bQ,\bmu}(\bX)|\ge t)\le  2\exp\left\{-\frac{nt^2}{4\left(1+\|\bQ\|_{\ell_\infty\to \ell_\infty}\right)^2}\right\}.
$$
\end{lemma}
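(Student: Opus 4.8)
The plan is to obtain this concentration bound from the method of exchangeable pairs \citep{chatterjee2007stein}, applied to the Glauber-dynamics pair associated with the Ising model \eqref{eqn:general_ising}. Given $\bX\sim\PJ$, draw an index $I$ uniformly on $\{1,\dots,n\}$ independently of $\bX$, and form $\bX'$ by leaving every coordinate of $\bX$ unchanged except the $I$-th, which is resampled from the conditional law of $X_I$ given $\{X_j:j\ne I\}$ under \eqref{eqn:general_ising}. Since one random-scan Glauber update leaves $\PJ$ invariant and is reversible, $(\bX,\bX')$ is an exchangeable pair. Put $F(\bX,\bX'):=\sum_{i=1}^n(X_i-X_i')$; because $\bX,\bX'$ differ in at most one coordinate this equals $X_I-X_I'$, so $F$ is a genuine antisymmetric function of the pair, and using the conditional-mean identity $\E_{\bQ,\bmu}(X_i\mid X_j:j\ne i)=\tanh(\mi+\mu_i)$ recorded above one checks that
\[
\EJ\!\left[F(\bX,\bX')\mid\bX\right]=\frac1n\sum_{i=1}^n\bigl(X_i-\tanh(\mi+\mu_i)\bigr)=f_{\bQ,\bmu}(\bX),
\]
while the tower property gives $\EJ f_{\bQ,\bmu}(\bX)=0$, so the quantity we wish to control is already centered.

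The next step is to bound the local oscillation $|f_{\bQ,\bmu}(\bX)-f_{\bQ,\bmu}(\bX')|$. On $\{I=i\}$ only $X_i$ changes; since $\bQ$ is hollow, $\mi$ (and hence the $i$-th $\tanh$ term) is unchanged, so the $i$-th summand of $f_{\bQ,\bmu}$ moves by exactly $\tfrac1n(X_i-X_i')$, whereas for $k\ne i$ we have $m_k(\bX)-m_k(\bX')=\bQ_{ki}(X_i-X_i')$. Combining these with the $1$-Lipschitz property of $\tanh$ and $|X_i-X_i'|\le2$ yields
\[
\bigl|f_{\bQ,\bmu}(\bX)-f_{\bQ,\bmu}(\bX')\bigr|\;\le\;\frac{|X_i-X_i'|}{n}\Bigl(1+\sum_{k\ne i}|\bQ_{ki}|\Bigr)\;\le\;\frac{2}{n}\bigl(1+\|\bQ\|_{\ell_\infty\to\ell_\infty}\bigr),
\]
where the last inequality uses $\sum_k|\bQ_{ki}|=\sum_k|\bQ_{ik}|\le\|\bQ\|_{\ell_\infty\to\ell_\infty}$ by symmetry of $\bQ$.

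Finally, I would invoke Chatterjee's exchangeable-pair concentration inequality: if $\tfrac12\E\bigl[|F(\bX,\bX')|\,|f_{\bQ,\bmu}(\bX)-f_{\bQ,\bmu}(\bX')|\mid\bX\bigr]\le C$ almost surely, then $\PJ(|f_{\bQ,\bmu}(\bX)|\ge t)\le 2\exp(-t^2/2C)$ for all $t>0$ (using $\EJ f_{\bQ,\bmu}=0$). Bounding $|F(\bX,\bX')|=|X_I-X_I'|\le 2\le 2(1+\|\bQ\|_{\ell_\infty\to\ell_\infty})$ and using the oscillation bound above, one may take $C=2(1+\|\bQ\|_{\ell_\infty\to\ell_\infty})^2/n$, which gives exactly the stated inequality. (One factor of $1+\|\bQ\|_{\ell_\infty\to\ell_\infty}$ above is generous — it can be replaced by $1$ — but the stated form is all that is needed downstream.)

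I expect the only delicate point to be the oscillation estimate: flipping a single spin $X_i$ simultaneously perturbs all the neighbouring conditional means $\tanh(m_k(\bX)+\mu_k)$, and the key is to see that their aggregate movement is governed by the $i$-th row sum of $|\bQ|$, which is where the hypothesis $\|\bQ\|_{\ell_\infty\to\ell_\infty}=O(1)$ enters. Verifying exchangeability and the conditional-expectation identity of the first step, and applying the concentration inequality as a black box in the last step, are routine.
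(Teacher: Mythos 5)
Your proof is correct and follows essentially the same route as the paper's: both construct the Glauber-dynamics exchangeable pair, take $F(\bx,\by)=\sum_i(x_i-y_i)$ as the antisymmetric function with $\E[F\mid\bX]=f_{\bQ,\bmu}(\bX)$, bound the quantity $v(\bX)=\tfrac12\E[|F|\,|f(\bX)-f(\bX')|\mid\bX]$, and invoke Theorem~3.3 of Chatterjee (2007). The one cosmetic difference is in the oscillation bound: the paper expands $\tanh(m_j(\bX^i)+\mu_j)-\tanh(m_j(\bX)+\mu_j)$ to second order and tracks $g'$, $g''$ terms, whereas you simply use the $1$-Lipschitz property of $\tanh$; your version is cleaner, in fact yields the sharper constant $4(1+\|\bQ\|_{\ell_\infty\to\ell_\infty})$ in the denominator rather than $4(1+\|\bQ\|_{\ell_\infty\to\ell_\infty})^2$, and you correctly note that relaxing to the squared form recovers the statement as written.
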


Lemma \ref{lemma_chatterjee} follows from a standard application of Stein's Method for concentration inequalities \citep{chatterjee2005concentration, chatterjee2007stein, chatterjee2010applications}. We defer the detailed proof to the Appendix.

We are now in position to prove Theorem \ref{thm:upper}. We first consider the Type I error. By Lemma \ref{lemma_chatterjee}, there exists a constant $C>0$ such that
$$
\P_{\bQ,\mathbf{0}}(\sqrt{n}\tilde{X}\ge L_n)\le 2\exp(-CL_n^2)\to 0.
$$
It remains to consider the Type II error. Note that
\begin{eqnarray*}
\tilde{X}-f_{\bQ,\bmu}(\bX)&=&{1\over n}\sum_{i=1}^n\left[\tanh\left(\sum_{j\neq i}\bQ_{ij}X_j+\mu_i\right)-\tanh\left(\sum_{j\neq i}\bQ_{ij}X_j\right)\right]\\
&=&{1\over n}\sum_{i\in {\rm supp}(\bmu)}\left[\tanh\left(\sum_{j\neq i}\bQ_{ij}X_j+\mu_i\right)-\tanh\left(\sum_{j\neq i}\bQ_{ij}X_j\right)\right]\\
&\ge&{1\over n}\sum_{i\in {\rm supp}(\bmu)}\left[\tanh\left(\sum_{j\neq i}\bQ_{ij}X_j+B\right)-\tanh\left(\sum_{j\neq i}\bQ_{ij}X_j\right)\right],
\end{eqnarray*}
where the inequality follows from the monotonicity of $\tanh$.

Observe that for any $x\in \R$ and $y>0$,
\begin{equation}
\label{eq:lowertanh}
\tanh(x+y)-\tanh(x)={[1-\tanh^2(x)]\tanh(y)\over 1+\tanh(x)\tanh(y)}\ge [1-\tanh(x)]\tanh(y),
\end{equation}
where the inequality follows from the fact that $|\tanh(x)|\le 1$. Thus,
$$
\tilde{X}-f_{\bQ,\bmu}(\bX)\ge {\tanh(B)\over n}\sum_{i\in {\rm supp}(\bmu)}\left[1-\tanh\left(\sum_{j\neq i}\bQ_{ij}X_j\right)\right].
$$
Because
$$
\sum_{j\neq i}\bQ_{ij}X_j\le \|\bQ\|_{\ell_\infty\to\ell_\infty},
$$
we get
$$
\tilde{X}-f_{\bQ,\bmu}(\bX)\ge {s\tanh(B)\over n}\left[1-\tanh\left(\|\bQ\|_{\ell_\infty\to\ell_\infty}\right)\right].
$$
Therefore,
$$
\sqrt{n}\tilde{X}-\sqrt{n}f_{\bQ,\bmu}(\bX)\ge {s\tanh(B)\over \sqrt{n}}\left[1-\tanh\left(\|\bQ\|_{\ell_\infty\to\ell_\infty}\right)\right]\gg L_n.
$$
This, together with another application of Lemma \ref{lemma_chatterjee}, yields the desired claim.

\subsection{Proof of Theorem \ref{thm:lower}}
The proof is somewhat lengthy and we break it into several steps.

\subsubsection{Reduction to magnetization}
\label{likelihood_ratio}
We first show that a lower bound can be characterizing the behavior of $\bar{X}$ under the alternative. To this end, note that for any test $T$ and a distribution $\pi$ over $\Xi(s,B)$, we have
\begin{eqnarray*}
\mathrm{Risk}(T,\Xi_{s,B},\bQ)&=&\P_{\bQ,\mathbf{0}}\left(T(\bX)=1\right)+\sup_{\bmu \in {\Xi}(s,B)}\P_{\bQ,\bmu}\left(T(\bX)=0\right)\\
&\geq& \P_{\bQ,\mathbf{0}}\left(T(\bX)=1\right)+\int\P_{\bQ,\bmu}\left(T(\bX)=0\right)d\pi(\bmu).
\end{eqnarray*}
The rightmost hand side is exactly the risk when testing $H_0$ against a simple alternative where $\bX$ follows a mixture distribution:
$$
\P_\pi(\bX=\bx) := \int\P_{\bQ,\bmu}\left(\bX=\bx\right)d\pi(\bmu)
$$
By Neymann-Pearson Lemma, this can be further lower bounded by
$$
\mathrm{Risk}(T,\Xi(s,B),\bQ)\ge \P_{\bQ,\mathbf{0}}\left(L_{\pi}(\bX)>1\right)+\int\P_{\bQ,\bmu}\left(L_{\pi}(\bX)\leq 1\right)d\pi(\bmu),
$$
where
$$
L_\pi(\bX)={\P_\pi(\bX)\over \P_{\bQ,\mathbf{0}}(\bX)}
$$
is the likelihood ratio.

We can now choose a particular prior distribution $\pi$ to make $L_\pi$ a monotone function of $\bar{X}$. To this end, let $\pi$ be supported over
$$
\widetilde{\Xi}(s,B)=\left\{\bmu\in \{0,B\}^n: |{\rm supp}(\bmu)|=s\right\},
$$
so that
$$
\pi(\bmu)\propto Z(\bQ,\bmu),\qquad \forall \bmu\in \widetilde{\Xi}(s,B).
$$
It is not hard to derive that, with this particular choice,
$$
L_{\pi}(\bX)\propto \sum_{\bmu\in \widetilde{\Xi}(s,B)} \exp(\bmu^\top \bX)=\E_S \exp\left(B \sum_{i\in S} X_i\right),
$$
where $\E_S$ means expectation over $S$, a uniformly sampled subset of $[n]$ of size $s$. It is clear, by symmetry, that the rightmost hand side is invariant to the permutation of the coordinates of $\bX$. In addition, it is an increasing function of
$$|\{i\in [n]: X_i=1\}|={1\over 2}\left(n+\sum_{i=1}^n X_i\right),$$
and hence an increasing function of $\bar{X}$.

The observation that $L_\pi(\bX)$ is an increasing function of $\bar{X}$ implies that there exists a sequence $\kappa_n$ such that
\begin{eqnarray*}
\mathrm{Risk}(T,\Xi(s,B),\bQ)&\ge&\P_{\bQ,\mathbf{0}}\left(L_{\pi}(\bX)>1\right)+\int\P_{\bQ,\bmu}\left(L_{\pi}(\bX)\leq 1\right)d\pi(\bmu)\\
&=&\P_{\bQ,\mathbf{0}}\left(\sum_{i=1}^n X_i>\kappa_n\right)+\int\P_{\bQ,\bmu}\left(\sum_{i=1}^n X_i\leq \kappa_n\right)d\pi(\bmu)\\
&\ge&\P_{\bQ,\mathbf{0}}\left(\sum_{i=1}^n X_i>\kappa_n\right)+\inf_{\bmu \in \widetilde{\Xi}(s,B)}\P_{\bQ,\bmu}\left(\sum_{i=1}^n X_i\leq \kappa_n\right).
\end{eqnarray*}
It now remains to study the behavior of $\bar{X}$.

In particular, it suffices to show that, for any fixed $x>0$,
\begin{equation}
\label{eq:claim_lower}
\liminf_{n\to\infty}\P_{\bQ,\mathbf{0}}\left\{\sum_{i=1}^n X_i>x\sqrt{n}\right\}>0,
\end{equation}
and for any $x_n\to \infty$,
\begin{equation}
\label{eq:claim_upper}
\limsup_{n\to\infty}\sup_{\bmu \in \widetilde{\Xi}(s,B)}\P_{\bQ,\bmu}\left(\sum_{i=1}^n X_i> x_n\sqrt{n}\right)=0.
\end{equation}
Assuming \eqref{eq:claim_lower} holds, then for any test $T$ to be asymptotic powerful, we need $\kappa_n\gg \sqrt{n}$ to ensure that
$$
\P_{\bQ,\mathbf{0}}\left\{\sum_{i=1}^n X_i>\kappa_n\right\}\to 0.
$$
But, in the light of \eqref{eq:claim_upper}, this choice necessarily leads to
$$
\inf_{\bmu \in \widetilde{\Xi}(s,B)}\P_{\bQ,\bmu}\left\{\sum_{i=1}^n X_i\leq \kappa_n\right\}\to 1,
$$
so that
$$
\mathrm{Risk}(T,\Xi(s,B),\bQ)\to 1.
$$
In other words, there is no asymptotic powerful test if both \eqref{eq:claim_lower} and \eqref{eq:claim_upper} hold. We now proceed to prove them separately.

\subsubsection{Proof of \eqref{eq:claim_upper}:} Recall that $m_i(\bX)=\sum_{j=1}^n \bQ_{ij}X_j$ and assume $\bmu\in \tilde{\Xi}(s,B)$ with $s\tanh(B)\le C\sqrt{n}$. Also let $\mathbf{r}=(r_1,\ldots,r_n)^\top$ where $\mathbf{r}=\mathbf{r}(\bQ):=\bQ {\bf 1}$. We  split the proof into two cases, depending on whether $B\le 1$ or $B>1$.
	
\paragraph{The case of $B\in [0,1]:$} Write
\begin{eqnarray*}
\sum_{i=1}^n X_i&=&\sum_{i=1}^n \left[X_i-\tanh(m_i(\bX)+\mu_i)\right]+\sum_{i=1}^n \left[\tanh(m_i(\bX)+\mu_i)-\tanh(m_i(\bX))\right]\\
&&+\sum_{i=1}^n \left[\tanh(m_i(\bX))-m_i(\bX)\right]+\sum_{i=1}^n m_i(\bX).
\end{eqnarray*}
Observe that, 
$$
\sum_{i=1}^n m_i(\bX)={\bf 1}^\top \bQ \bX=\sum_{i=1}^n r_iX_i =\rho^* \sum_{i=1}^n X_i+\sum_{i=1}^n (r_i-\rho_\ast)X_i,
$$
where $\rho_\ast=\frac{1}{n}\mathbf{1}^\top\mathbf{r}=\frac{1}{n}\mathbf{1}^\top \bQ \mathbf{1}$. Thus,
\begin{eqnarray*}
(1-\rho_\ast)\sum_{i=1}^n X_i&=&\sum_{i=1}^n \left[X_i-\tanh(m_i(\bX)+\mu_i)\right]+\sum_{i=1}^n \left[\tanh(m_i(\bX)+\mu_i)-\tanh(m_i(\bX))\right]\\
&&+\sum_{i=1}^n \left[\tanh(m_i(\bX))-m_i(\bX)\right]+\sum_{i=1}^n (r_i-\rho_\ast)X_i.\\
&=:& \Delta_1 +\Delta_2+\Delta_3+\Delta_4.
\end{eqnarray*}

It is clear that
$$
\P_{\bQ,\bmu}\left\{\sum_{i=1}^n X_i> x_n\sqrt{n}\right\}\le \sum_{j=1}^4\P_{\bQ,\bmu}\left\{\Delta_j> {1\over 4(1-\rho_\ast)}x_n\sqrt{n}\right\}.
$$
We now argue that for any $x_n\to \infty$,
\begin{equation}
\label{eq:bdDelta}
\sup_{\bmu \in \widetilde{\Xi}(s,B)}\P_{\bQ,\bmu}\left\{\Delta_j> {1\over 4(1-\rho_\ast)}x_n\sqrt{n}\right\}\to 0,\qquad j=1,\ldots,4.
\end{equation}
The case for $\Delta_4$ follows from our assumption $(\left\|\bQ{\mathbf 1}-\frac{{\mathbf 1}^\top \bQ{\mathbf 1}}{n}{\bf 1}\right\|^2=O(1))$ upon Cauchy-Schwarz inequality. The case $\Delta_1$ follows immediately from Lemma \ref{lemma_chatterjee}. On the other hand, we note that
\begin{eqnarray*}
\sum_{i=1}^n \left[\tanh(m_i(\bX)+\mu_i)-\tanh(m_i(\bX))\right]&\le& \sum_{i=1}^n \left|\tanh(m_i(\bX)+\mu_i)-\tanh(m_i(\bX))\right|\\
&\le&\sum_{i=1}^n\tanh(\mu_i)=s\tanh(B),
\end{eqnarray*}
where the second inequality follows from the subadditivity of $\tanh$. The bound \eqref{eq:bdDelta} for $\Delta_2$ then follows from the fact that $s\tanh(B)=O(\sqrt{n})$.

We now consider $\Delta_3$. Recall that $|x-\tanh(x)|\le x^2$. It suffices to show that, as $x_n\to\infty$,
\begin{equation}
\label{eq:bdDelta3}
\sup_{\bmu \in \widetilde{\Xi}(s,B)}\P_{\bQ,\bmu}\left\{\sum_{i=1}^n m_i^2(\bX)> {1\over 4}x_n\sqrt{n}\right\}\to 0,
\end{equation}
which follows from Markov inequality and the following lemma.

\begin{lemma}\label{lem:step1}
Let ${\bf X}$ be a random vector following the Ising model \eqref{eqn:general_ising}. Assume that $\mathbf{Q}_{i,j}\geq 0$ for all $(i,j)$ such that $\|\bQ\|_{\ell_\infty\to\ell_\infty}\le \rho$ for some constant $\rho<1$, and $\|\bQ\|_{\rm F}^2=O(\sqrt{n})$. Then for any fixed $C>0$,
$$\limsup_{n\rightarrow\infty}\sup_{\bmu\in [0,1]^n:\atop\sum_{i=1}^n\bmu_i\le C\sqrt{n}}\frac{1}{\sqrt{n}}\E_{\mathbf{Q},\bmu} \left(\sum_{i=1}^nm_i^2(\bX)\right)<\infty.$$
\end{lemma}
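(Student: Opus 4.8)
The plan is to establish the stronger exact bound $S := \E_{\bQ,\bmu}\big[\sum_{i=1}^n m_i^2(\bX)\big] = O(\sqrt n)$, uniformly over $\bmu\in[0,1]^n$ with $\sum_i\mu_i\le C\sqrt n$, by deriving a self-improving inequality for $S$ and then solving it as a quadratic in $\sqrt S$.

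\emph{An exact identity.} Because $\bQ$ is hollow, $m_j(\bX)=\sum_k Q_{jk}X_k$ does not involve $X_j$, so \eqref{eqn:general_ising} gives $\E_{\bQ,\bmu}[X_j\mid X_k, k\ne j]=\tanh(m_j(\bX)+\mu_j)$. Splitting $m_i(\bX)=Q_{ij}X_j+m_i^{(j)}(\bX)$ with $m_i^{(j)}(\bX):=\sum_{k\ne j}Q_{ik}X_k$ free of $X_j$ and using $X_j^2=1$ yields $\E[m_i(\bX)X_j]=Q_{ij}+\E[m_i^{(j)}(\bX)\tanh(m_j(\bX)+\mu_j)]$; multiplying by $Q_{ij}$ and summing over $i,j$,
\[
S = \|\bQ\|_{\rm F}^2 + \sum_{i,j}Q_{ij}\,\E\big[m_i^{(j)}(\bX)\,\tanh(m_j(\bX)+\mu_j)\big].
\]

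\emph{Bounding the cross term.} I would use $|m_i^{(j)}(\bX)|\le|m_i(\bX)|+Q_{ij}$ and $|\tanh(m_j(\bX)+\mu_j)|\le|m_j(\bX)|+\mu_j$, expand the product $(|m_i(\bX)|+Q_{ij})(|m_j(\bX)|+\mu_j)$ into four pieces, and estimate each after multiplication by $Q_{ij}$ and summation: (i) $\sum_{i,j}Q_{ij}|m_i||m_j|\le\rho\sum_i m_i^2(\bX)$ by AM--GM together with $\sum_j Q_{ij}\le\rho$ and $\sum_i Q_{ij}\le\rho$ (symmetry); (ii) $\sum_{i,j}Q_{ij}|m_i|\mu_j\le\big(\rho\sum_i m_i^2(\bX)\big)^{1/2}\big(\rho\sum_j\mu_j\big)^{1/2}\le\rho\sqrt C\,n^{1/4}\big(\sum_i m_i^2(\bX)\big)^{1/2}$ by weighted Cauchy--Schwarz and $\mu_j^2\le\mu_j$; (iii) $\sum_{i,j}Q_{ij}^2|m_j(\bX)|\le\rho\|\bQ\|_{\rm F}^2$ using the a.s. bound $|m_j(\bX)|\le\|\bQ\|_{\ell_\infty\to\ell_\infty}\le\rho$; (iv) $\sum_{i,j}Q_{ij}^2\mu_j\le\|\bQ\|_{\rm F}^2$ using $\mu_j\le1$. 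Taking expectations, applying Jensen ($\E[(\sum_i m_i^2)^{1/2}]\le S^{1/2}$), and using $\|\bQ\|_{\rm F}^2=O(\sqrt n)$, this yields a uniform-in-$\bmu$ inequality of the shape
\[
S \le A\sqrt n + \rho\,S + A\,n^{1/4}\sqrt S
\]
for a constant $A$ depending only on $\rho$, $C$ and the implied constant for $\|\bQ\|_{\rm F}^2=O(\sqrt n)$.

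\emph{Closing the loop, and the main obstacle.} With $u:=\sqrt S$ the last display reads $(1-\rho)u^2-An^{1/4}u-A\sqrt n\le0$; since $1-\rho>0$, the quadratic formula forces $u=O(n^{1/4})$, hence $S=O(\sqrt n)$ with a constant independent of $\bmu$, which is the claim. The delicate point is the split in the second step: the one genuinely dangerous term $\sum_{i,j}Q_{ij}\E[|m_i||m_j|]$ must be re-absorbed into $\rho S$ --- this is exactly where $\rho<1$ is indispensable, mirroring the role of high temperature --- while every remaining term must be shown to be $O(\sqrt n)$ or $O(n^{1/4}\sqrt S)$, which is where the hypotheses $\|\bQ\|_{\rm F}^2=O(\sqrt n)$ and $\sum_i\mu_i\le C\sqrt n$ (together with $\mu_i\le1$ and the a.s. bound $|m_j(\bX)|\le\rho$) are consumed. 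Note that the crude pointwise bound $|m_i(\bX)|\le\rho$ alone only gives $S=O(n)$; it is the exact identity above --- in the spirit of the method of exchangeable pairs / conditional-mean centering --- that supplies the extra $n^{-1/2}$ factor.
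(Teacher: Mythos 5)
Your proposal is correct, and it takes a genuinely different route from the paper. The paper bounds the log--partition function via Kac's Gaussian comparison ($\E[Y^k]\le\E[W^k]$ for Rademacher $Y$, Gaussian $W$), evaluates the resulting Gaussian integral by a spectral decomposition of $\bQ$, and then uses convexity of $t\mapsto\log Z(t\bQ,\bmu)$ to dominate $\E_{\bQ,\bmu}\,\tfrac12\bX^\top\bQ\bX$ by a difference quotient, finally converting the bound on $\E\sum_i X_im_i(\bX)$ to one on $\E\sum_i m_i^2(\bX)$ by $\tanh(x)/x\ge\eta>0$ on $[-1,1]$. You instead establish an exact Stein-type identity $S=\|\bQ\|_{\rm F}^2+\sum_{i,j}Q_{ij}\,\E\big[m_i^{(j)}(\bX)\tanh(m_j(\bX)+\mu_j)\big]$ and close a self-bounding quadratic inequality in $\sqrt{S}$. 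The condition $\rho<1$ enters both proofs but for different purposes: the paper needs $t=(1+\rho)/(2\rho)>1$ with $|t\lambda_i|<1$ so the Gaussian integral converges and the Taylor bounds on $1/(1-t\lambda_i)$ and $-\log(1-t\lambda_i)$ apply, whereas you need $1-\rho>0$ to reabsorb the $\rho S$ term; similarly $\bQ_{ij}\ge 0$ underlies the Gaussian domination step in the paper and the manipulation $|m_i^{(j)}|\le|m_i|+Q_{ij}$, $\|\bQ\|_{\ell_\infty\to\ell_\infty}=\max_i\sum_jQ_{ij}$ in yours. Your argument is more elementary and self-contained (no spectral decomposition, no partition-function estimates) and sits squarely in the exchangeable-pairs spirit the paper invokes elsewhere; the paper's argument, while heavier, yields partition-function control that is reusable in their other estimates. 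I checked the four-piece bound on the cross term: (i) $\le\rho\sum_im_i^2$ by AM--GM and row/column sums, (ii) $\le\rho\sqrt{C}\,n^{1/4}(\sum_im_i^2)^{1/2}$ by weighted Cauchy--Schwarz and $\mu_j^2\le\mu_j$, (iii) $\le\rho\|\bQ\|_{\rm F}^2$ via $|m_j|\le\rho$ a.s., (iv) $\le\|\bQ\|_{\rm F}^2$ via $\mu_j\le 1$; after Jensen, the resulting inequality $(1-\rho)S\le O(\sqrt n)+O(n^{1/4})\sqrt{S}$ does give $S=O(\sqrt n)$ uniformly, as claimed.
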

The proof of Lemma \ref{lem:step1} is deferred to the Appendix in \cite{mmy2017}.
		
\paragraph{The case of $B>1:$}
		
		In this case $s\tanh(B)\le C\sqrt{n}$ implies $s\le C'\sqrt{n}$, where $C':=C/\tanh(1)$. Also, since the statistic $\sum_{i=1}^nX_i$ is stochastically non-decreasing in $B$, without loss of generality it suffices to show that, for a fixed $S\subset[n]$ obeying $|S|=s$,
		\begin{align}\label{eqn:limit2}
		\limsup_{K\rightarrow\infty}\limsup_{n\rightarrow\infty}\limsup_{B\rightarrow\infty}\sup_{\bmu\in \tilde{\Xi}(s,B):\atop \mathrm{supp}(\bmu)=S}\P_{\mathbf{Q},\bmu}\left\{\sum_{i\in S^c}X_i>K\sqrt{n}\right\}=0.
		\end{align}
		Now, for $i\in S$ we have for $\bmu \in \tilde{\Xi}(s,B)$
		\begin{align*}\P_{\mathbf{Q},\bmu}(X_i=1|X_j=x_j,j\ne i)=\frac{e^{B+m_i(\mathbf{x})}}{e^{B+m_i(\mathbf{x})}+e^{-B-m_i(\mathbf{x})}}=&\frac{1}{1+e^{-2m_i(\mathbf{x})-2B}}
		\ge \frac{1}{1+e^{2-2B}},
		\end{align*}
		and so $\lim_{B\rightarrow\infty}\P_{\mathbf{Q},\bmu}(X_i=1,i\in S)=1$ uniformly in $\bmu \in \tilde{\Xi}(s,B)$ with  $s\leq C'\sqrt{n}$.
		Also note that for any configuration $(x_j,j\in S^c)$ we have
		\begin{align}
		\P_{\mathbf{Q},\bmu}(X_i=x_i,i\in S^c|X_i=1,i\in S)&\propto \exp\left(\frac{1}{2}\sum_{i,j\in S^c}x_ix_j\bQ_{ij}+\sum_{i\in S^c} x_i\tilde{\mu}_{S,i}\right), \label{eqn:conditional_distribution}
		\end{align}
		where $\tilde{\mu}_{S,i}:=\sum_{j\in S}\bQ_{ij}\le \|\mathbf{Q}\|_{\ell_\infty\to\ell_\infty}\le\rho$. Further we have
				\begin{align}
		\sum_{i=1}^n\tilde{\bmu}_{S,i}=\sum_{i=1}^n\sum_{j\in S}\bQ_{ij}=\sum_{j\in S}\sum_{i=1}^n\bQ_{ij}\le C'\rho \sqrt{n}. \label{eqn:mu_tilde}
		\end{align}
		We shall refer to the distribution in \eqref{eqn:conditional_distribution} as $\P_{\tilde{\bQ}_S,\tilde{\bmu}_S}$ where $\tilde{\bQ}_S$ is the $(n-s)\times (n-s)$ principle matrix of $\mathbf{Q}$ by restricting the index in $S^c$. Therefore we simply need to verify that $\tilde{\bQ}_S$ satisfy the conditions for $\bQ$ in Theorem \ref{thm:lower}. Trivially $\tilde{\bQ}_{ij}\geq 0$ for all $i,j$ and $\|\tilde{\bQ}\|_{\ell_\infty \rightarrow \ell_\infty}\leq \|{\bQ}\|_{\ell_\infty \rightarrow \ell_\infty}\leq \rho$. For verifying the third condition, i.e. 
		
$$
\left\|\tilde{\bQ}{\mathbf 1}-\frac{{\mathbf 1}^\top \tilde{\bQ}{\mathbf 1}}{n}{\bf 1}\right\|^2=O(1),
$$	

note that
\begin{align*}
O(1)=\left\|{\bQ}{\mathbf 1}-\frac{{\mathbf 1}^\top {\bQ}{\mathbf 1}}{n}{\bf 1}\right\|^2=&\frac{1}{2n}\sum_{i,j=1}^n(r_i(\mathbf{Q})-r_j(\mathbf{Q}))^2\\
\ge &\frac{1}{2n}\sum_{i,j\in S^c}(r_i(\mathbf{Q})-r_j(\mathbf{Q}))^2\\
=&\frac{n-s}{n}\times \frac{1}{2(n-s)}\sum_{i,j\in S^c}(r_i(\mathbf{Q})-r_j(\mathbf{Q}))^2\\
\ge &\frac{n-s}{n}\left\|\tilde{\bQ}{\mathbf 1}-\frac{{\mathbf 1}^\top \tilde{\bQ}{\mathbf 1}}{n}{\bf 1}\right\|^2.
\end{align*}
Therefore with $o_B(1)$ denoting a sequence of real numbers that converges to $0$ uniformly over $\bmu\in \tilde{\Xi}(s,B)$,
\begin{align*}
\ & \limsup_{B\rightarrow\infty}\sup_{\bmu\in \tilde{\Xi}(s,B):\atop \mathrm{supp}(\bmu)=S}\P_{\mathbf{Q},\bmu}\left\{\sum_{i\in S^c}X_i>K\sqrt{n}\right\}\\&\leq \limsup_{B\rightarrow\infty}\sup_{\bmu\in \tilde{\Xi}(s,B):\atop \mathrm{supp}(\bmu)=S}\left\{\P_{\mathbf{Q},\bmu}\left(\sum_{i\in S^c}X_i>K\sqrt{n}|X_j=1, j\in S\right)+o_B(1)\right\}\\
&=\limsup_{B\rightarrow\infty}\sup_{\bmu\in \tilde{\Xi}(s,B):\atop \mathrm{supp}(\bmu)=S}\P_{\tilde{\bQ}_S,\tilde{\bmu}_S}\left(\sum_{i\in S^c}X_i>K\sqrt{n}\right)\\
& \leq \sup_{S\subset [n]}\sup_{\tilde{\bmu}_S:\atop \sum\limits_{i \in S^c}\tilde{\mu}_{S,i}\leq C'\rho\sqrt{n}}\P_{\tilde{\bQ}_S,\tilde{\bmu}_S}\left(\sum_{i\in S^c}X_i>K\sqrt{n}\right),
\end{align*}
	where the last line follows from \eqref{eqn:mu_tilde}. The proof of the claim \eqref{eqn:limit2} thereafter follows using the same argument as that for the case when $B<1$ since $\tilde{\mu}_{S,i}\leq \rho<1$ for each $i\in S^c$.
		
\subsubsection{Proof of \eqref{eq:claim_lower}:}

It is clear that, by symmetry,
\begin{equation}
\label{eq:eq00}
\P_{\mathbf{Q},\mathbf{0}}\Big(\Big|\sum_{i=1}^n X_i|>K\sqrt{n}\Big)=2\P_{\mathbf{Q},\mathbf{0}}\Big(\sum_{i=1}^n X_i> K\sqrt{n}\Big).
\end{equation}
In establishing \eqref{eq:claim_upper}, we essentially proved that
\begin{equation}
\label{eq:eq02}
\limsup_{K\rightarrow\infty}\limsup_{n\rightarrow\infty}\sup_{\bmu\in \tilde{\Xi}(s,B)}\P_{\mathbf{Q},\bmu}\left(\sum_{i=1}^nX_i>K\sqrt{n}\right)=0.
\end{equation}
By choosing $K$ large enough, we can make the right hand side of \eqref{eq:eq00} less than $1/2$. This gives 
\begin{align}\label{eq:re-write}
\sum_{{\bf x}\in \{-1,1\}^n}e^{{\bf x}^\top\mathbf{Q}{\bf x}/2}\le 2\sum_{{\bf x}\in D_{n,K}}e^{{\bf x}^\top\mathbf{Q}{\bf x}/2},
\end{align}
where  $D_{n,K}:=\Big\{|\sum_{i=1}^n X_i|\le K\sqrt{n}\Big\}$. Then, setting $C_n:=\{\sum_{i=1}^nX_i>\lambda \sqrt{n}\}$, for any $K>\lambda$ we have 
\begin{align*}
\P_{\mathbf{Q},\mathbf{0}}(C_n)
\ge \P_{\mathbf{Q},\mathbf{0}}(C_n\cap D_{n,K})
=&\frac{\sum_{{\bf x}\in C_n\cap D_{n,K}} e^{{\bf x}'\mathbf{Q}{\bf x}/2}}{\sum_{{\bf x}\in \{-1,1\}^n} e^{{\bf x}'\mathbf{Q}{\bf x}/2}}\\
\ge &\frac{1}{2}\frac{\sum_{{\bf x}\in C_n\cap D_{n,K}} e^{{\bf x}'\mathbf{Q}{\bf x}/2}}{\sum_{{\bf x}\in D_{n,K}} e^{{\bf x}'\mathbf{Q}{\bf x}/2}}\\
\ge &\frac{e^{-2Kt}}{2}\frac{\sum_{{\bf x}\in C_n\cap D_{n,K}} e^{{\bf x}'\mathbf{Q}{\bf x}/2+\frac{t}{\sqrt{n}}\sum_{i=1}^nx_i}}{\sum_{{\bf x}\in D_{n,K}} e^{{\bf x}'\mathbf{Q}{\bf x}/2}}\\
=&\frac{e^{-2Kt}}{2}\frac{\P_{\mathbf{Q},\bmu(t)}(C_n\cap D_{n,K})}{\P_{\mathbf{Q},\mathbf{0}}( D_{n,K})}\frac{Z(\mathbf{Q},\bmu(t))}{Z(\mathbf{Q},{\bf 0})}\\
\ge &\frac{e^{-2Kt}}{2}\P_{\mathbf{Q},\bmu(t)}(C_n\cap D_{n,K}),
\end{align*}
where $\bmu(t)=tn^{-1/2}{\bf 1}$. In the last inequality we use the fact that the function $t\mapsto Z(\mathbf{Q},\bmu(t))$ is non-increasing in $t$ on $[0,\infty)$, as
$$\frac{\partial }{\partial t}Z(\mathbf{Q},\bmu(t))=\frac{1}{\sqrt{n}}\E_{\mathbf{Q},\bmu(t)}\sum_{i=1}^nX_i\ge \frac{1}{\sqrt{n}}\E_{\mathbf{Q},{\bf 0}}\sum_{i=1}^nX_i=0. $$
 To show \eqref{eq:claim_lower}, it thus suffices to show that there exists $K$ large enough and $t>0$ such that
$$
\liminf_{n\rightarrow\infty}\P_{\mathbf{Q},\bmu(t)}(C_n\cap D_{n,K})>0.$$
To this end, it suffices to show that for any $\lambda>0$ there exists $t$ such that
\begin{equation}
\label{eq:eq01}
\liminf_{n\rightarrow\infty}\P_{{\mathbf{Q}},\bmu(t)}(\sum_{i=1}^nX_i>\lambda \sqrt{n})>0.
\end{equation}
If \eqref{eq:eq01} holds, then there exists $t>0$ such that 
$$\liminf_{n\rightarrow\infty}\P_{\mathbf{Q},\bmu(t)}(C_n)>0.$$
It now suffices to show that for any $t$ fixed one has
\begin{align*}
\limsup_{K\rightarrow\infty}\limsup_{n\rightarrow\infty}\P_{\mathbf{Q},\bmu(t)}(D_{n,K}^c)=0,
\end{align*}
which follows from \eqref{eq:eq02}.

It now remains to show \eqref{eq:eq01}. To begin, note that for $h>0$,
\begin{eqnarray*}
\E_{{\mathbf{Q}},\bmu(h)} X_i&=&\E_{{\mathbf{Q}},\bmu(h)} \tanh\left(m_i(\bX)+\frac{h}{\sqrt{n}}\right)\\
&=&\E_{{\mathbf{Q}},\bmu(h)}\frac{\tanh(m_i(\bX))+\tanh\left(\frac{h}{\sqrt{n}}\right)}{1+\tanh(m_i(\bX))\tanh\left(\frac{h}{\sqrt{n}}\right)}\\
&\ge& \frac{1}{2}\left[\E_{{\mathbf{Q}},\bmu(h)} \tanh(m_i(\bX))+\tanh\left(\frac{h}{\sqrt{n}}\right)\right]\\
&\ge& \frac{1}{2}\tanh\left(\frac{h}{\sqrt{n}}\right).
\end{eqnarray*}
In the last inequality we use Holley inequality  \citep[e.g., Theorem 2.1 of][]{grimmett2006random} for the two probability measures $\P_{\mathbf{Q},\mathbf{0}}$ and $\P_{\mathbf{Q},\bmu(h)}$ to conclude $$\E_{\mathbf{Q},\bmu(h)}\tanh(m_i(\bX)\ge \E_{\mathbf{Q},0}\tanh(m_i(\bX))=0,$$
in the light of (2.7) of \cite{grimmett2006random}. Adding over $1\le i\le n$ gives
\begin{align}\label{eq:mean_estimate}
F_n'(h)=\frac{1}{\sqrt{n}}\E_{{\mathbf{Q},\bmu(h)} }\sum_{i=1}^n X_i\ge \frac{\sqrt{n}}{2}\tanh\left(\frac{h}{\sqrt{n}}\right),
\end{align}
where $F_n(h)$ is the log normalizing constant for the model $\P_{\mathbf{Q},\bmu(h)}$. 
Thus, using  Markov's inequality one gets
\begin{align*}
\P_{\mathbf{Q},\bmu(t)}\left(\sum_{i=1}^nX_i\le \lambda \sqrt{n}\right)
=&\P_{\mathbf{Q},\bmu(t)}\left(e^{- \frac{1}{\sqrt{n}}\sum_{i=1}^nX_i}\ge e^{- \lambda}\right)
\le \exp\left\{ \lambda+F_n(t- 1)-F_n(t)\right\},
\end{align*}
Using \eqref{eq:mean_estimate}, the exponent in the rightmost hand side can be estimated as
\begin{align*}
 \lambda +F_n(t- 1)-F_n(t)
= \lambda -\int_{t-1}^{t} F_n'(h)dh\le  \lambda-\frac{\sqrt{n}}{2}\tanh\left(\frac{t-1}{\sqrt{n}}\right),
\end{align*}
which is negative and uniformly bounded away from $0$ for all $n$ large for $t=4\lambda+1$, from which \eqref{eq:eq01} follows.

\subsection{Proof of Theorem \ref{thm:z1_unstructured}}
We set $m_i(\bX)=\sum_{j=1}^n \bQ_{ij}X_j$ and assume $\bmu\in \tilde{\Xi}(s,B)$ with $s\tanh(B)\le C\sqrt{n}$.  By the same argument as that of Section \ref{likelihood_ratio}, it suffices to show that there does not exist a sequence of positive reals $\{L_n\}_{n\ge 1}$ such that
$$
\P_{\mathbf{Q},\mathbf{0}}\left(\sum_{i=1}^nX_i>L_n\right)+\P_{\mathbf{Q},\bmu}\left(\sum_{i=1}^nX_i<L_n\right)\to 0.
$$			
Suppose, to the contrary, that there exists such a sequence. For any $t\in \R$ we have
	\begin{align*}
	\E_{\mathbf{Q},\mathbf{0}}\exp\left\{\frac{t}{\sqrt{n}}\sum_{i=1}^nX_i\right\}=&\frac{Z\left(\mathbf{Q},\frac{t}{\sqrt{n}}{\bf 1}\right)}{Z\left(\mathbf{Q},\mathbf{0}\right)}
	=\lambda_1\left(\frac{t}{\sqrt{n}}\right)^n+\lambda_2\left(\frac{t}{\sqrt{n}}\right)^n,
	\end{align*}
	where $$\lambda_i(t):=\frac{e^\theta \cosh( t)+(-1)^{i+1}\sqrt{e^{2\theta}\sinh(t)^2+e^{-2\theta}}}{e^\theta+e^{-\theta}}.$$
	This computation for the normalizing constants for the Ising model on the cycle graph of length $n$ is standard \citep{ising1925beitrag}.
	By a direct calculation we have $$\lambda_1(0)=1> \lambda_2(0)=\tanh(\theta),\quad \lambda_1'(0)=\lambda_2'(0)=0,\quad c(\theta):=\lambda_1''(0)>0,$$
	and so 
	\begin{align*}
	\E_{\mathbf{Q},\mathbf{0}}e^{\frac{t}{\sqrt{n}}\sum_{i=1}^nX_i}&
	=\lambda_1\Big(\frac{t}{\sqrt{n}}\Big)^n+\lambda_2\Big(\frac{t}{\sqrt{n}}\Big)^n\stackrel{n\rightarrow\infty}{\rightarrow} e^{\frac{c(\theta)t^2}{2)}}.
	\end{align*}
	This implies that under $H_0$ $$\frac{1}{\sqrt{n}}\sum_{i=1}^nX_i\stackrel{d}{\rightarrow}N(0,{c(\theta)}),$$
	which for any $\lambda>0$ gives
	\begin{align*}
	\liminf_{n\rightarrow\infty}\P_{\mathbf{Q},\mathbf{0}}\left(\sum_{i=1}^nX_i>\lambda \sqrt{n}\right)>0.
	\end{align*}
	Therefore, $L_n\gg \sqrt{n}$. Now invoking Lemma \ref{lemma_chatterjee}, for any $K>0$ we have
	$$\P_{\mathbf{Q},\bmu}\left\{\left|\sum_{i=1}^n(X_i-\tanh(m_i(\bX)+\mu_i)\right|>K\sqrt{n}\right\}\le 2e^{-K^2/4(1+\theta)^2}.$$
	On this set we have for a universal constant $C<\infty$
	\begin{align*}
	\left|\sum_{i=1}^n(X_i-\tanh(m_i(\bX))\right|\le &\left|\sum_{i=1}^n(X_i-\tanh(m_i(\bX)+\mu_i))\right|\\
	&\hskip 50pt +\left|\sum_{i=1}^n(\tanh(m_i(\bX)+\mu_i)-\tanh(m_i(\bX)))\right|\\
	\le &K\sqrt{n}+C\sum_{i=1}^n\tanh(\mu_i)\\
	\le &K\sqrt{n}+Cs\tanh(B),
	\end{align*}
	and so
	\begin{align}\label{eq:raj4}
	\P_{\mathbf{Q},\bmu}\left\{\left|\sum_{i=1}^n(X_i-\tanh(m_i(\bX)))\right|>K\sqrt{n}+Cs\tanh(B)\right\}\le 2e^{-K^2/4(1+\theta)^2}.
	\end{align}
	Also, setting $g(t):=t/\theta-\tanh(t)$, we get
	\begin{align*}
	\sum_{i=1}^n(X_i-\tanh(m_i(\bX))=&\sum_{i=1}^ng(m_i(\bX))=\{Q_n(\bX)-R_n(\bX)\} g(\theta),
	\end{align*}
	where $$Q_n(\bX):=\left|\{1\le i\le n:m_i(\bX)=\theta\}\right|,\quad R_n(\bX):=\left|\{1\le i\le n:m_i(\bX)=-\theta\}\right|.$$
	Indeed, this holds, as in this case $m_i(\bX)$ can take only three values $\{-\theta,0,\theta\}$, and $g(.)$ is an odd function. Thus using \eqref{eq:raj4} gives
	\begin{align*}
	\P_{\mathbf{Q},\bmu_n}\left\{\left|Q_n(\bX)-R_n(\bX)\right|>\frac{K\sqrt{n}+Cs\tanh(B)}{g(\theta)}\right\}\le 2e^{-K^2/4(1+\theta)^2}.
	\end{align*}
	But then we have
	\begin{align*}
	\P_{\mathbf{Q},\bmu_n}\left\{\sum_{i=1}^nX_i>L_n\right\}=&\P_{\mathbf{Q},\bmu}\left\{\sum_{i=1}^nm_i(\bX)>\theta L_n\right\}\\
	=&\P_{\mathbf{Q},\bmu}\left\{Q_n(\bX)-R_n(\bX)>L_n\right\}\le 2e^{-K^2/4(1+\theta)^2},
	\end{align*}
	as $$L_n\gg \frac{K\sqrt{n}+Cs\tanh(B)}{g(\theta)}.$$
	This immediately yields the desired result.

\subsection{Proof of Theorem \ref{thm:curie}}
By Theorem \ref{thm:lower}, there is no asymptotically powerful test if $s\tanh(B)=O(n^{1/2})$. It now suffices to show that the na\"ive test is indeed asymptotically powerful. To this end, we first consider the Type I error. By Theorem 2 of \cite{Ellis_Newman},
$$
\sqrt{n}\bar{X}\to_dN\left(0,\frac{1}{1-\theta}\right),
$$
which immediately implies that Type I error
$$
\P_{\theta, \mathbf{0}}\left(\sqrt{n}\bar{X}\ge L_n\right)\to 0.
$$

Now consider Type II error. Observe that
\begin{eqnarray*}
\bar{X}-f_{\bQ,\bmu}(\bX)&=&{1\over n}\sum_{i=1}^n \tanh\left(\sum_{j\neq i}\bQ_{ij}X_j+\mu_i\right)\\
&=&{1\over n}\sum_{i=1}^n \tanh\left(\theta\bar{X}+\mu_i-\theta X_i/n\right)\\
&=&{1\over n}\sum_{i=1}^n \tanh\left(\theta\bar{X}+\mu_i\right)+O(n^{-1}),
\end{eqnarray*}
where the last equality follows from the fact that $\tanh$ is Lipschitz. In addition,
\begin{eqnarray*}
{1\over n}\sum_{i=1}^n \tanh\left(\theta\bar{X}+\mu_i\right)&=&\tanh\left(\theta\bar{X}\right)+{1\over n}\sum_{i\in {\rm supp}(\bmu)}\left[\tanh\left(\theta\bar{X}+\mu_i\right)-\tanh\left(\theta\bar{X}\right)\right]\\
&\ge&\tanh\left(\theta\bar{X}\right)+{1\over n}\sum_{i\in {\rm supp}(\bmu)}\left[\tanh\left(\theta\bar{X}+B\right)-\tanh\left(\theta\bar{X}\right)\right]\\
&\ge&\tanh\left(\theta\bar{X}\right)+{s\tanh(B)\over n}\left[1-\tanh\left(\theta\bar{X}\right)\right],\\
&\ge&\tanh\left(\theta\bar{X}\right)+{s\tanh(B)\over n}\left[1-\tanh\left(\theta\right)\right],
\end{eqnarray*}
where the second to last inequality follows from (\ref{eq:lowertanh}). In other words,
$$
\sqrt{n}(\bar{X}-\tanh(\theta\bar{X}))-\sqrt{n}f_{\bQ,\bmu}(\bX)\ge {s\tanh(B)\over \sqrt{n}}\left[1-\tanh\left(\theta\right)\right].
$$
Since $\sup_{x\in\R}\frac{x-\tanh(\theta x)}{x}<\infty,$  an application of Lemma \ref{lemma_chatterjee}, together with the fact that $L_n=o(n^{-1/2}s\tanh(B))$ yields
$$
\P_{\theta,\bmu}\left(\sqrt{n}\bar{X}\ge L_n\right)\to 1.
$$

\subsection{Proof of Theorem \ref{thm:curie1}}
The proof of attainability follows immediately from Theorem \ref{thm:upper}. Therefore here we focus on the proof of the lower bound. As before, by the same argument as those following Section \ref{likelihood_ratio}, it suffices to show that there does not exist a sequence of positive reals $\{L_n\}_{n\ge 1}$ such that
$$
\P_{\mathbf{Q},\mathbf{0}}\left(\sum_{i=1}^nX_i>L_n\right)+\P_{\mathbf{Q},\bmu}\left(\sum_{i=1}^nX_i<L_n\right)\to 0.
$$			

From the proof of Lemma \ref{lemma_chatterjee} and the inequality $|\tanh(x)-\tanh(y)|\le |x-y|$, for any fixed $t<\infty$ and $\bmu\in \widetilde{\Xi}(s,B)$ we have
\begin{align*}
\Ptheta\Big(\bar{X}>\frac{s}{n}\tanh(\theta \bar{X}+B)+\frac{n-s}{n}\tanh(\theta \bar{X})+\frac{\theta}{n}+\frac{t}{\sqrt{n}}\Big)\le  2 e^{-\frac{t^2}{2na_n}},
\end{align*}
where
$$a_n:=\frac{2}{n}+\frac{2\theta}{n}+\frac{2\theta}{n^2}.$$
Also note that
\begin{align*}
&\frac{s}{n}\tanh(\theta \bar{X}+B)+\frac{n-s}{n}\tanh(\theta \bar{X})\le \tanh(\theta \bar{X})+C\frac{s}{n}\tanh(B),
\end{align*}
for some constant $C<\infty$. Therefore
\begin{align*}
\Ptheta\left\{\bar{X}-\tanh(\theta \bar{X})> C\frac{s}{n}\tanh(B)+\frac{\theta}{n}+\frac{t}{\sqrt{n}}\right\}\le 2\exp\left(-t^2/2na_n\right).
\end{align*}

Since $s\tanh(B)=O(n^{1/2})$, we have
\begin{align}\label{eq:foralltheta}\sup_{\bmu\in \widetilde{\Xi}(s,B)}\Ptheta\left\{\bar{X}-\tanh(\theta \bar{X})> \frac{C(t)}{\sqrt{n}}\right\}\le 2 \exp\left(-t^2/2na_n\right)
\end{align}
for some finite positive constant $C(t)$. Now, invoking Theorem 1 of \cite{Ellis_Newman}, under $H_0:\bmu=\mathbf{0}$ we have
$$\sqrt{n}(\bar{X}-m)|\bar{X}>0\stackrel{d}{\rightarrow}N\left(0,\frac{1-m^2}{1-\theta(1-m^2)}\right),$$
where $m$ is the unique positive root of $m=\tanh(\theta m)$. The same argument as that from Section \ref{likelihood_ratio} along with the requirement to control the Type I error then imply that without loss of generality one can assume the test $\phi_n$ rejects if $\bar{X}> m+L_n$, where $L_n\gg n^{-1/2}$. 

Now, note that $g(x)=x-\tanh(\theta x)$ implies that  $g'(x)$ is positive and increasing on the set $[m,\infty)$, and therefore 
$$g(x)\ge g(m)+(x-m)g'(m).$$
This gives
\begin{align*}&\P_{\theta,\bmu}\left(\bar{X}> m+L_n, \bar{X}-\tanh(\theta \bar{X})\le \frac{C(t)}{\sqrt{n}}\right)\\
\le &\P_{\theta,\bmu}\left(\bar{X}> m+L_n, \bar{X}-m\le \frac{C(t)}{g'(m)\sqrt{n}}\right),
\end{align*}
which is $0$ for all large $n$, as $L_n\gg n^{-1/2}$. This, along with \eqref{eq:foralltheta} gives $$\liminf_{n\rightarrow\infty}\inf_{\bmu\in \widetilde{\Xi}(s,B)}\E_{\theta,\bmu}(1-\phi_n)\ge1,$$  thus concluding the proof.

\subsection{Proof of Theorem \ref{thm:corr}}

The proof of Theorem \ref{thm:corr} is based on an auxiliary variable approach known as Kac's Gaussian transform  \citep{kac1959partition}, which basically says that the moment generating function of $N(0,1)$ is $e^{t^2/2}$. This trick has already been used in computing asymptotics of log partition functions \citep{comets1991asymptotics,park2004solution,mukherjee2013consistent}.

In particular, the proof relies on the following two technical lemmas. The proof to both lemmas is relegated to the Appendix in \cite{mmy2017} for brevity.

\begin{lem}\label{lem:bayesian}
Let ${\bX}$ follow a Curie-Weiss model of \eqref{eq:Curie} with $\theta>0$. Given $\bX=\bx$ let $Z_n$ be a normal random variable with mean $\bar{x}$ and variance $1/(n\theta)$. Then
\begin{enumerate}
\item[(a)]
Given $Z_n=z$ the random variables $(X_1,\cdots,X_n)$ are mutually independent,  with
\begin{align*}
\Ptheta(X_i=x_i)=&\frac{e^{(\mu_i+z\theta)x_i}}{e^{\mu_i+z\theta}+e^{-\mu_i-z\theta}},
\end{align*}
where $x_i\in \{-1,1\}$.
\item[(b)]
The marginal density of $Z_n$ is proportional to $e^{-f_{n,\bmu}(z)}$, where
\begin{align}\label{eqn:z_density}
f_{n,\bmu}(z):=\frac{n\theta z^2}{2}-\sum_{i=1}^n\log\cosh(\theta z+\mu_i).
\end{align}
\item[(c)] 
$$\sup_{\bmu\in [0,\infty)^n}\Etheta\Big(\sum_{i=1}^n (X_i-\tanh(\mu_i+\theta Z_n))\Big)^2\le n.$$
\end{enumerate}
\end{lem}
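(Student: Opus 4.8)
The plan is to realize the Curie--Weiss model as the $\bX$-marginal of an explicit joint distribution on $\{\pm 1\}^n\times\R$ via the Hubbard--Stratonovich/Kac transform, and then read off parts (a), (b), (c) from this joint law. First I would use $x_i^2=1$ to write $\frac{\theta}{n}\sum_{i<j}x_ix_j=\frac{\theta}{2n}\big(\sum_i x_i\big)^2-\frac{\theta}{2}$, and then apply the Gaussian identity (valid since $\theta>0$) $\exp\big(\frac{\theta}{2n}s^2\big)=\sqrt{\frac{n\theta}{2\pi}}\int_\R\exp\big(-\frac{n\theta}{2}z^2+\theta z s\big)\,dz$ with $s=\sum_i x_i$. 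This shows $\Ptheta(\bX=\bx)$ is proportional to $\int_\R \exp\big(-\frac{n\theta}{2}z^2\big)\prod_{i=1}^n e^{(\theta z+\mu_i)x_i}\,dz$, so that the function $(\bx,z)\mapsto \exp\big(-\frac{n\theta}{2}z^2+\sum_i(\theta z+\mu_i)x_i\big)$, suitably normalized, is a joint density of a pair $(\bX,Z_n)$ whose $\bX$-marginal is exactly \eqref{eq:Curie}. Completing the square in $z$ gives $-\frac{n\theta}{2}z^2+\theta z\sum_i x_i=-\frac{n\theta}{2}(z-\bar x)^2+\frac{\theta}{2n}(\sum_i x_i)^2$, which identifies the conditional law of $Z_n$ given $\bX=\bx$ as $N(\bar x,1/(n\theta))$, matching the statement; equivalently one may simply \emph{define} $Z_n\mid\bX$ by this Gaussian and verify by Fubini that the resulting joint density is the one above.

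Parts (a) and (b) are then immediate from the product structure in $\bx$. Conditioning on $Z_n=z$, the factor $\prod_i e^{(\theta z+\mu_i)x_i}$ factorizes across coordinates, so the $X_i$ are conditionally independent with $\Ptheta(X_i=x_i\mid Z_n=z)\propto e^{(\theta z+\mu_i)x_i}$, i.e.\ equal to $e^{(\mu_i+z\theta)x_i}/(e^{\mu_i+z\theta}+e^{-\mu_i-z\theta})$, which is (a). Summing over $\bx\in\{\pm1\}^n$ yields the marginal density of $Z_n$ proportional to $e^{-n\theta z^2/2}\prod_i\big(e^{\theta z+\mu_i}+e^{-\theta z-\mu_i}\big)\propto \exp\big(-\frac{n\theta z^2}{2}+\sum_i\log\cosh(\theta z+\mu_i)\big)=e^{-f_{n,\bmu}(z)}$, which is (b).

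For (c), I would condition on $Z_n$: by (a), given $Z_n=z$ the $X_i$ are independent with $\Etheta(X_i\mid Z_n=z)=\tanh(\mu_i+\theta z)$, hence each summand $X_i-\tanh(\mu_i+\theta Z_n)$ has conditional mean zero and the cross terms vanish, giving $\Etheta\big[\big(\sum_i (X_i-\tanh(\mu_i+\theta Z_n))\big)^2\,\big|\,Z_n=z\big]=\sum_i\big(1-\tanh^2(\mu_i+\theta z)\big)\le n$. Taking expectation over $Z_n$ yields the bound, uniformly in $\bmu$ (the restriction $\bmu\in[0,\infty)^n$ is not actually needed for this step).

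I do not anticipate a genuine obstacle here: the argument is a bookkeeping exercise. The only points that require care are tracking the normalizing constants through the Gaussian transform and invoking Fubini--Tonelli to interchange the finite sum over $\bx$ with the $z$-integral, both routine since $\theta>0$ makes the Gaussian integral absolutely convergent and all summands positive.
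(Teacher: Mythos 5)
Your proposal is correct and follows essentially the same route as the paper: both realize the Curie--Weiss law as the $\bX$-marginal of the Kac/Gaussian auxiliary-variable joint density, read off (a) and (b) from the product structure in $\bx$, and prove (c) by conditioning on $Z_n$ and using the iterated expectation with conditional variance $\sum_i\operatorname{sech}^2(\mu_i+\theta Z_n)\le n$. Your observation that the restriction $\bmu\in[0,\infty)^n$ is inessential for (c) is also accurate.
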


While the previous lemma applies to all $\theta>0$, the next one specializes to the case $\theta=1$ and gives crucial estimates which will be used in proving Theorem \ref{thm:corr}.

For any $\bmu\in (\R^+)^n$ set
$$A(\bmu):=\frac{1}{n}\sum_{i=1}^n\tanh(\bmu_i).$$
This can be thought of as the total amount of signal present in the parameter $\bmu$. 
In particular, note that for $\bmu\in \Xi(s,B)$ we have $$A(\bmu)\ge \frac{s\tanh(B)}{n},$$
and for $\bmu\in \tilde{\Xi}(s,B)$ we have $$A(\bmu)= \frac{s\tanh(B)}{n}.$$
In the following we abbreviate $s\tanh(B)/n:=A_n$.

\begin{lem}\label{lem:theta1}
\begin{enumerate}
\item[(a)]
If $\theta=1$, for any $\bmu\in \Xi(s,B) $ the function $f_{n,\bmu}(\cdot)$ defined by \eqref{eqn:z_density} is strictly convex, and has a unique global minimum $m_n\in (0,1]$, such that
\begin{align}\label{eq:est1}
m_n^3=\Theta(A(\bmu)).
\end{align}
\item[(b)]
 $$\limsup_{K\rightarrow\infty}\limsup_{n\rightarrow\infty}\Ptheta(Z_n-m_n>K n^{-1/4})=0.$$
\item[(c)]
If $A_n\gg n^{-3/4}$ then there exists $\delta>0$ such that
$$\limsup_{n\rightarrow\infty}\sup_{\bmu:A(\bmu)\ge A_n}\P_{\theta,\bmu}\Big(Z_n\le \delta m_n\Big)=0.$$
\end{enumerate}
\end{lem}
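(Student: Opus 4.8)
I would begin from $f_{n,\bmu}'(z)=nz-\sum_{i=1}^n\tanh(z+\mu_i)$ and $f_{n,\bmu}''(z)=\sum_{i=1}^n\tanh^2(z+\mu_i)\ge 0$. Since the $\mu_i\ge 0$ cannot all satisfy $z+\mu_i=0$ simultaneously (as $\mathrm{supp}(\bmu)\neq\emptyset$), $f_{n,\bmu}''$ vanishes at most at one point, so $f_{n,\bmu}'$ is strictly increasing and $f_{n,\bmu}$ strictly convex. As $f_{n,\bmu}'(0)=-nA(\bmu)<0$ and $f_{n,\bmu}'(1)=n-\sum_i\tanh(1+\mu_i)>0$, there is a unique global minimizer $m_n\in(0,1)$. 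Evaluating $f_{n,\bmu}'(m_n)=0$ gives $n\bigl(m_n-\tanh m_n\bigr)=\sum_{i=1}^n\bigl[\tanh(m_n+\mu_i)-\tanh(m_n)\bigr]$; the left side equals $n\cdot\Theta(m_n^3)$ for $m_n\in(0,1)$, while the right side lies between $(1-\tanh1)\sum_i\tanh\mu_i$ (by \eqref{eq:lowertanh} with $x=m_n\le1$) and $\sum_i\tanh\mu_i$ (by subadditivity of $\tanh$), i.e.\ it is $\Theta(nA(\bmu))$. Dividing by $n$ yields \eqref{eq:est1}, $m_n^3=\Theta(A(\bmu))$, with absolute implied constants.

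\textbf{Common setup for (b) and (c).} By Lemma~\ref{lem:bayesian}(b) the marginal density of $Z_n$ is $\propto e^{-f_{n,\bmu}(z)}$, which by (a) is log-concave with mode $m_n$; hence both claims reduce to bounding ratios of integrals of $e^{-f_{n,\bmu}}$. Writing $\Delta(z):=f_{n,\bmu}(z)-f_{n,\bmu}(m_n)\ge0$, the normalizing factor cancels, and I need a lower bound for $\int_\R e^{-\Delta}$ together with upper bounds for the relevant one-sided integrals of $e^{-\Delta}$. The workhorse is $f_{n,\bmu}''(z)=\sum_i\tanh^2(z+\mu_i)$: it is increasing on $[0,\infty)$, satisfies $f_{n,\bmu}''(z)\ge n\tanh^2 z\gtrsim nz^2$ on $[0,1]$, and $f_{n,\bmu}''(m_n)\le 2n\tanh^2 m_n+2nA(\bmu)$, so $f_{n,\bmu}''(m_n)\asymp n\tanh^2 m_n\asymp nm_n^2$. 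Integrating twice gives, for $0<w\le1$, $\Delta(m_n+w)\ge\max\{\tfrac12 f_{n,\bmu}''(m_n)w^2,\ cnw^4\}$, while for $w\gtrsim1$ one has $\Delta(m_n+w)\gtrsim nw^2$ (the $\sum_i\log\cosh\mu_i$ terms cancel between $f_{n,\bmu}(z)$ and $f_{n,\bmu}(m_n)$), so the range $w\gtrsim1$ contributes only $O(e^{-cn})$. A matching upper bound $f_{n,\bmu}''(m_n+w)\lesssim f_{n,\bmu}''(m_n)$ for $w\lesssim f_{n,\bmu}''(m_n)^{-1/2}$ then shows $\int_\R e^{-\Delta}\gtrsim\min\{n^{-1/4},\,f_{n,\bmu}''(m_n)^{-1/2}\}$.

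\textbf{Parts (b) and (c).} For (b): the concentration scale of $Z_n-m_n$ is $\min\{n^{-1/4},f_{n,\bmu}''(m_n)^{-1/2}\}\le n^{-1/4}$. If $f_{n,\bmu}''(m_n)\le\sqrt n$ the quartic lower bound dominates, $\int_\R e^{-\Delta}\gtrsim n^{-1/4}$ and $\int_{Kn^{-1/4}}^\infty e^{-\Delta(m_n+w)}dw\le n^{-1/4}\int_K^\infty e^{-cu^4}du+O(e^{-cn})$, giving ratio $\le g(K)+o(1)$ with $g(K)\to0$; if $f_{n,\bmu}''(m_n)>\sqrt n$ the Gaussian bound dominates and the factor $f_{n,\bmu}''(m_n)^{-1/2}$ cancels, leaving ratio $\le Ce^{-K^2/2}$. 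Either way $\limsup_n\Ptheta(Z_n-m_n>Kn^{-1/4})\to0$ as $K\to\infty$ (the null $\bmu=\mathbf0$, with $m_n=0$, falls in the first alternative). For (c): under $A_n\gg n^{-3/4}$, \eqref{eq:est1} gives $m_n\gg n^{-1/4}$, so $nm_n^4\to\infty$ and $\sqrt n\,m_n^2\to\infty$. Since $f_{n,\bmu}$ is decreasing on $(-\infty,m_n)$, $\int_{-\infty}^{\delta m_n}e^{-f_{n,\bmu}}\le e^{-f_{n,\bmu}(\delta m_n)}/|f_{n,\bmu}'(\delta m_n)|$, where $|f_{n,\bmu}'(\delta m_n)|=\int_{\delta m_n}^{m_n}f_{n,\bmu}''\gtrsim nm_n^3$ and, integrating $f_{n,\bmu}''\gtrsim nz^2$ over $[\delta m_n,m_n]$ with $\delta$ a fixed small constant, $f_{n,\bmu}(\delta m_n)-f_{n,\bmu}(m_n)\gtrsim nm_n^4$. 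Combining with $\int_\R e^{-f_{n,\bmu}}\gtrsim(\sqrt n\,m_n)^{-1}e^{-f_{n,\bmu}(m_n)}$ from the setup, $\P_{\theta,\bmu}(Z_n\le\delta m_n)\lesssim\frac{\sqrt n\,m_n}{nm_n^3}e^{-cnm_n^4}=\frac{1}{\sqrt n\,m_n^2}e^{-cnm_n^4}\le e^{-cnm_n^4}\to0$; since every constant is absolute and $m_n^3\gtrsim A(\bmu)\ge A_n$, the bound is uniform over $\{\bmu:A(\bmu)\ge A_n\}$.

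\textbf{Main difficulty.} The crux is not the tail estimates but the lower bound on the normalizing integral $\int_\R e^{-f_{n,\bmu}}$: the naive bound only gives $\gtrsim n^{-1/2}e^{-f_{n,\bmu}(m_n)}$, which is lossy by a factor $n^{1/4}$ and makes the ratios in (b)--(c) fail to vanish. One must integrate over the genuine curvature scale $f_{n,\bmu}''(m_n)^{-1/2}$ at the mode, which is exactly what the two-sided estimate $f_{n,\bmu}''(m_n)\asymp nm_n^2$ supplies; and in (c) it is precisely the hypothesis $A_n\gg n^{-3/4}$ (equivalently $m_n\gg n^{-1/4}$) that forces $nm_n^4\to\infty$ to dominate this scale. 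The corresponding bounds for $\bmu\in\widetilde{\Xi}(s,B)$, where $A(\bmu)=s\tanh(B)/n$ exactly, are a special case.
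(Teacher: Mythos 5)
Your proof is correct and uses the same Laplace-type analysis of the one-dimensional auxiliary-variable density $e^{-f_{n,\bmu}}$ as the paper, built around the same two estimates: the mode $m_n$ with $m_n^3=\Theta(A(\bmu))$, and the curvature scale $f_{n,\bmu}''\asymp nm_n^2$ near the mode. Part~(a) is essentially identical (you spell out the easy upper bound $m_n-\tanh m_n\le A(\bmu)$ that the paper leaves implicit). For parts~(b) and~(c), however, you organize the tail estimates differently. The paper's proof of~(b) is a single clean bound: set $b_n:=f_n''(m_n+Kn^{-1/4})$, show $b_n\gtrsim K^2\sqrt n$, and compare the two Gaussian tails $\P(N(0,1)>Kn^{-1/4}\sqrt{b_n})/\P(0<N(0,1)<Kn^{-1/4}\sqrt{b_n})$; there is no case split. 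You instead prove a quartic lower bound $\Delta(m_n+w)\gtrsim nw^4$ on $[0,1]$ together with the Gaussian bound $\frac12 f_n''(m_n)w^2$, then dichotomize on whether $f_n''(m_n)\gtrless\sqrt n$. Both arguments work, and yours makes the ``criticality $\Rightarrow$ quartic'' mechanism more explicit, at the cost of more cases. For~(c), both you and the paper bound the left-tail integral by $e^{-f_n(\delta m_n)}/|f_n'(\delta m_n)|$ using convexity, with $|f_n'(\delta m_n)|\gtrsim nm_n^3$; but you additionally extract the energy gap $f_n(\delta m_n)-f_n(m_n)\gtrsim nm_n^4$ to get an exponentially small bound, whereas the paper is content with a polynomial bound $\lesssim(\sqrt n\,m_n^2)^{-1}$ obtained by cancelling $e^{-f_n(m_n)}$ against the Gaussian estimate of the normalizer over $[m_n,2m_n]$ with curvature $c_n=f_n''(2m_n)$. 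Your version of~(c) gives a stronger conclusion but requires the extra gap estimate; the paper's is leaner since the polynomial decay already suffices. No gaps: all the intermediate claims (the two-sided bound $f_n''(m_n+w)\lesssim f_n''(m_n)$ on the curvature scale, the quartic lower bound, the $O(e^{-cn})$ contribution from $w\gtrsim1$, and the $nm_n^4$ gap for $\delta$ a small constant) check out.
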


The proof of Lemma \ref{lem:theta1} can be found in the Appendix in \cite{mmy2017}. We now come back to the proof of Theorem \ref{thm:corr}. To establish the upper bound, define a test function $\phi_n$ by $\phi_n({\bf X})=1$ if $\bar{X}>2\delta A_n^{1/3}$, and $0$ otherwise, where $\delta$ is as in part (c) of Lemma \ref{lem:theta1}.
By Theorem 1 of \cite{Ellis_Newman}, under $H_0:\bmu=0$ we have
\begin{align}\label{eq:null}
n^{1/4}\bar{X}\stackrel{d}{\rightarrow}Y,
\end{align}
where $Y$ is a random variable on $\R$ with density proportional to $e^{-y^4/12}$. Since $A_n\gg n^{-3/4}$ we have
$$\Pzero(\bar{X}>2\delta A_n^{1/3})=o(1),$$
and so it suffices to show that
\begin{align}
\sup_{\bmu:A(\bmu)\ge A_n}\Ptheta(\bar{X}\le 2\delta A_n^{1/3 })=o(1). \label{eqn:theta1_up_to_show}
\end{align}
To this effect, note that
\begin{align*}
\sumn X_i=&\sumn (X_i-\tanh(\mu_i+Z_n))+\sumn \tanh(\mu_i+Z_n)\\
\ge &\sumn(X_i-\tanh(\mu_i+Z_n))+n \tanh(Z_n)
\end{align*}
Now by Part (c) of Lemma \ref{lem:bayesian} and Markov inequality,
$$|\sumn (X_i-\tanh(\mu_i+Z_n))|\leq \delta nA_n^{1/3}$$
with probability converging to $1$ uniformly over $\bmu\in [0,\infty)^n$. Thus it suffices to show that
$$\sup_{\bmu:A(\bmu)\ge A_n}\Ptheta(nZ_n\le 3\delta n A_n^{1/3})=o(1).$$
But this follows on invoking Parts (a) and (c) of  Lemma \ref{lem:theta1}, and so the proof of the upper bound is complete.

To establish the lower bound, by the same argument as that from Section \ref{likelihood_ratio}, it suffices to show that there does not exist a sequence of positive reals $\{L_n\}_{n\ge 1}$ such that
$$
\P_{\mathbf{Q},\mathbf{0}}\left(\sum_{i=1}^nX_i>L_n\right)+\P_{\mathbf{Q},\bmu}\left(\sum_{i=1}^nX_i<L_n\right)\to 0.
$$			
If $\lim_{n\rightarrow \infty}n^{-3/4}L_n<\infty$, then \eqref{eq:null} implies
$$\liminf_{n\rightarrow\infty}\Ezero\phi_n>0,$$
and so we are done. Thus assume without loss of generality that $n^{-3/4}L_n\rightarrow\infty$. In this case we have
\begin{align*}
\sum_{i=1}^nX_i=&\sum_{i=1}^n(X_i-\tanh(\mu_i+Z_n))+\sum_{i=1}^n\tanh(\mu_i+Z_n)\\
\le &\sum_{i=1}^n(X_i-\tanh(\mu_i+Z_n))+\sum_{i=1}^n\tanh(\mu_i)+n|Z_n|, 
\end{align*}
and so
\begin{align*}
&\Ptheta\left(\sum_{i=1}^nX_i>L_n\right)\\
\le& \Ptheta\left\{|\sum_{i=1}^nX_i-\tanh(\mu_i+Z_n)|>L_n/3\right\}+\Ptheta\left\{nZ_n>L_n/3\right\}+\Ptheta\left\{nZ_n<-L_n/3\right\}
\end{align*}
where we use the fact that
$$\sum_{i=1}^n\tanh(\mu_i)=O(n^{1/4})\ll L_n.$$
Now by Part (c) of Lemma \ref{lem:bayesian} and Markov inequality, the first term above converges to $0$ uniformly over all $\bmu$.  Also by Parts (a) and (b) of Lemma \ref{lem:theta1}, $\Ptheta\left\{nZ_n>L_n/3\right\}$ converges to $0$ uniformly over all $\bmu$ such that $A(\bmu)=O(n^{-3/4})$. Finally note that the distribution of $Z_n$ is stochastically increasing in $\bmu$, and so
$$\Ptheta\left\{nZ_n<-L_n/3\right\}\le \P_{\theta,{\bf 0}}\left\{nZ_n<-L_n/3\right\},$$
which converges to $0$ by \eqref{eq:null}. This completes the proof of the lower bound.

\section*{Acknowledgments} The authors thank the Associate Editor and two anonymous referees for numerous helpful comments which substantially improved the content and presentation of the paper. The authors also thank James Johndrow for helping with package \verb+rstan+.

\begin{supplement} 
	\stitle{Supplement to "Global Testing Against Sparse Alternatives under Ising Models"}
	\slink[doi]{COMPLETED BY THE TYPESETTER}
	\sdatatype{.pdf}
	\sdescription{The supplementary material contain the proofs of additional technical results.}
\end{supplement}

\bibliographystyle{imsart-nameyear}
\bibliography{biblio_ising_minimax_new}
\appendix
\section*{Appendix -- Proof of Auxiliary Results}\label{sec:technical_lemmas}

\begin{proof}[Proof of Lemma \ref{lemma_chatterjee}]
This is a standard application of Stein's Method for concentration inequalities \citep{chatterjee2005concentration}. \tcred{The details are included here for completeness.}
One begins by noting that 
\begin{align*}
\EJ\left(X_i|X_j,j \neq i\right)=\tanh\left( m_i\left(\bX\right)+\mu_i\right), \quad m_i\left(\bX\right):=\sum\limits_{j =1}^n\bQ_{ij}X_j.
\end{align*}
Now let $\bX$ be drawn from \eqref{eqn:general_ising} and let $\bX^{'}$ is drawn by moving one step in the Glauber dynamics, i.e. let $I$ be a random variable which is discrete uniform on $\{1,2,\cdots,n\}$,  and replace the $I^{th}$ coordinate
of $\bX$ by an element drawn from the conditional distribution of the $I^{th}$ coordinate given the rest. It is not difficult to see that $(\bX,\bX^{'})$ is an exchangeable pair of random vectors. Further define an anti-symmetric function $F: \mathbb{R}^n \times \mathbb{R}^n \rightarrow \mathbb{R}$ as $F(\bx,\by)=\sum_{i=1}^n \left(x_{i}-y_{i}\right)$, which ensures that 
\begin{align*}\EJ\left(F(\bX,\bX^{'})|\bX\right)
=&\frac{1}{n}\sum\limits_{j=1}^nX_j-\tanh\left( m_j\left(\bX\right)+\mu_j\right)=f_{\bmu}(\bX).
\end{align*}
 Denoting $\bX^i$ to be $\bX$ with $X_i$ replaced by $-X_i$, by Taylor's series we have
\begin{align*}
&\tanh(m_j(\bX^i)+\mu_j)-\tanh(m_j(\bX)+\mu_j)\\
=&(m_j(\bX^i)-m_j(\bX))g'(m_j(\bX))+\frac{1}{2}(m_j(\bX^i)-m_j(\bX))^2 g''(\xi_{ij})\\
=&-2\bQ_{ij}X_ig'(m_j(\bX))+2\bQ_{ij}^2 g''(\xi_{ij})
\end{align*}
 for some $\{\xi_{ij}\}_{1\le i,j\le n}$, where $g(t)=\tanh(t)$. Thus $f_{\bmu}(\bX)-f_{\bmu}(\bX^i)$ can be written as   
 \begin{align*}
 f_{\bmu}(\bX)-f_{\bmu}(\bX^i)
  = &\frac{2X_i}{n}+\frac{1}{n}\sum\limits_{j=1}^n\Big\{\tanh\left( m_j\left(\bX^i\right)+\mu_j\right)-\tanh\left( m_j\left(\bX\right)+\mu_j\right)\Big\}\\
 = & \frac{2X_i}{n}-\frac{2X_i}{n}\sum_{j=1}^n  \bQ_{ij} g'(m_j(\bX))+\frac{2}{n}\sum_{j=1}^n  \bQ_{ij}^2 g''(\xi_{ij})
 \end{align*}
 \tcred{Now setting $p_i(\bX):=\PJ(X_i'=-X_i|X_k,k\ne i)$ we have
 \begin{align*}
 v(\bX):=&\frac{1}{2}\EJ \Big(|f_{\bmu}(\bX)-f_{\bmu}(\bX')\|(X_I-X_I')|\Big|{\bf X}\Big)\\
 =&\frac{1}{n}\sum_{i=1}^n |f_{\bmu}(\bX)-f_{\bmu}(\bX^i)|p_i(\bX)\\
&\leq\frac{2}{n^2}\sum_{i=1}^n  p_i(\bX)-\frac{2}{n^2} \sum_{i,j=1}^n|\bQ_{ij}p_i(\bX)g'(m_j(\bX))|\\&+\frac{2}{n^2}\sum_{i,j=1}^n\bQ_{ij}^2g''(\xi_{ij})^2X_ip_i(\bX)\\
 \le &\frac{2}{n}+\frac{2}{n^2}\sup_{{\bf u},{\bf v}\in [0,1]^n}|{\bf u}'\mathbf{Q}{\bf v}|+\frac{2}{n^2}\sum_{i,j=1}^n\bQ_{ij}^2,
 \end{align*}
where in the last line we use the fact that $\max(|g'(t)|,|g''(t)|)\le 1$. 
The proof of the Lemma is then completed by an application of Theorem 3.3 of \cite{chatterjee2007stein}.}
\end{proof}

\begin{proof}[Proof of Lemma \ref{lem:step1}]

Let $\mathbf{Y}:=(Y_1,\cdots,Y_n)$ be i.i.d. random variables on $\{-1,1\}$ with $\P(Y_i=\pm1)=\frac{1}{2}$, and let  $\mathbf{W}:=(W_1,\cdots,W_n)\stackrel{i.i.d.}{\sim}N(0,1)$. Also, for any $t>0$ let $Z(t{\bf Q},\mu)$ denote the normalizing constant of the p.m.f.
$$\frac{1}{Z(t\mathbf{Q}, \bmu)}\exp\left(\frac{1}{2}\bx^\top t\mathbf{Q} \bx+\bmu^\top\bx\right)$$
Thus we have
\begin{align*}
2^{-n}Z(t\mathbf{Q},\bmu)=\E \text{exp}\Big(\frac{t}{2}\mathbf{Y}^\top\mathbf{Q}\mathbf{Y}+\sum_{i=1}^n\mu_iY_i\Big)\le \E  \text{exp}\Big(\frac{t}{2}\mathbf{W}^\top\mathbf{Q}\mathbf{W}+\sum_{i=1}^n\mu_iW_i\Big),
\end{align*}
where we use the fact that $\E Y_i^k\le \E W_i^k$ for all positive integers $k$. Using spectral decomposition write $\mathbf{Q}=\mathbf{P}^\top\mathbf{\Lambda}\mathbf{ P}$ and set $\mathbf{\nu}:=\mathbf{P}\bmu, \widetilde{\mathbf{W}}=\mathbf{P}\mathbf{W}$ to note that
\begin{align*}
\E \text{exp}\Big(\frac{t}{2}\mathbf{W}^\top\mathbf{Q}\mathbf{W}+\sum_{i=1}^n\mu_iW_i\Big)=\E \text{exp}\Big(\frac{t}{2}\sum_{i=1}^n\lambda_i\widetilde{W}_i^2+\sum_{i=1}^n\nu_i\widetilde{W}_i\Big)
=\prod_{i=1}^n\frac{e^{\frac{\nu_i^2}{2(1-t\lambda_i)}}}{\sqrt{1-t\lambda_i}} .
\end{align*}
Combining for any $t>1$ we have the bounds 
\begin{align}\label{eqn:bound}
2^n \prod_{i=1}^n\cosh(\mu_i)=Z(\mathbf{0},\bmu)\le Z(\mathbf{Q},\bmu)\le Z(t\mathbf{Q},\bmu)\le 2^n 
\frac{ e^{\sum_{i=1}^n\frac{\nu_i^2}{2(1-t\lambda_i)}}}{\prod_{i=1}^n\sqrt{1-t\lambda_i}},
\end{align}
where the lower bound follows from on noting that $\log Z(t\mathbf{Q},\bmu)$ is monotone non-decreasing in $t$, using results about exponential families. Thus invoking convexity of the function $t\mapsto \log Z(t\mathbf{Q},\bmu)$ we have
\begin{align*}
\ & \E_{\mathbf{Q},\bmu} \frac{1}{2} {\bX}^\top\mathbf{Q}\bX=\frac{\partial \log Z(t\mathbf{Q},\bmu)}{\partial t}\Big|_{t=1}
\\&\le\frac{\log Z(t\mathbf{Q},\bmu)-\log Z(\mathbf{Q},\bmu)}{t-1}\\
&\le \sum_{i=1}^n \Big\{\frac{\nu_i^2}{2(1-t\lambda_i)}-\log\cosh(\mu_i)\Big\}-\sum_{i=1}^n\frac{1}{2}\log(1-t\lambda_i),
\end{align*}
where we use the bounds obtained in \eqref{eqn:bound}.
Proceeding to bound the rightmost hand side above, set $t=\frac{1+\rho}{2\rho}>1$ and note that $$|t\lambda_i|\le \frac{1+\rho}{2}<1.$$
For $x\in \frac{1}{2}[-(1+\rho),(1+\rho)]\subset (-1,1)$ there exists a constant $\gamma_\rho<\infty$ such that
$$\frac{1}{1-x}\le 1+x+2\gamma_\rho x^2,\quad -\log(1-x)\le x+2\gamma_\rho x^2.$$ Also a Taylor's expansion gives
$$-\log \cosh(x)\le -\frac{x^2}{2}+x^4,$$ where we have used the fact that $\|(\log\cosh(x))^{(4)}\|_\infty\le 1.$ These, along with the observations that $$\sum_{i=1}^n\lambda_i=tr(\mathbf{Q})=0, \quad \sum_{i=1}^n\nu_i^2=||\mathbf{P}\bmu||^2=||\bmu||^2$$ give the bound
\begin{align*}
\notag& \sum_{i=1}^n \Big\{\frac{\nu_i^2}{2(1-t\lambda_i)}-\log\cosh(\mu_i)\Big\}-\sum_{i=1}^n\frac{1}{2}\log(1-t\lambda_i)\\
 \le &\Big\{\frac{1}{2}\sumn \nu_i^2+\frac{t}{2}\sum_{i=1}^n\nu_i^2\lambda_i+t^2\gamma_\rho\sum_{i=1}^n\nu_i^2 \lambda_i^2\Big\}+\Big\{-\frac{1}{2}\sum_{i=1}^n\mu_i^2+\sum_{i=1}^n\mu_i^4\Big\}+\gamma_\rho t^2\sum_{i=1}^n\lambda_i^2\\
=&\frac{t}{2}\bmu^\top\mathbf{Q}\bmu+t^2\gamma_\rho \bmu^\top\mathbf{Q}^2\bmu+\sum_{i=1}^n\mu_i^4+\gamma_\rho t^2 \sum_{i,j=1}^n\bQ_{ij}^2\\
\le &\frac{t}{2}C\rho \sqrt{n}+t^2\gamma_\rho C\rho^2\sqrt{n}+C\sqrt{n}+\gamma_\rho t^2D\sqrt{n},
\end{align*}
where $D>0$ is such that $\sum_{i,j=1}^n\bQ_{ij}^2\le D\sqrt{n}$.
This along with \eqref{eqn:bound} gives
\begin{align*}
\Big[\frac{1}{2}C(1+t\rho) +t^2\gamma_\rho C\rho^2+C+\gamma_\rho t^2D\Big]\sqrt{n}\ge &\frac{1}{2}\E_{\mathbf{Q},\bmu}\bX^\top\mathbf{Q}\bX=\frac{1}{2}\E_{\mathbf{Q},\bmu}\sum_{i=1}^nX_im_i(\bX)
\end{align*}
But, for some random $(\xi_i, i=1,\ldots,n)$
\begin{align*}
\ & \frac{1}{2}\E_{\mathbf{Q},\bmu}\sum_{i=1}^nX_im_i(\bX)\\
&=\frac{1}{2}\E_{\mathbf{Q},\bmu}\sum_{i=1}^n\tanh(m_i(\bX)+\mu_i)m_i(\bX)\\
&=\frac{1}{2}\E_{\mathbf{Q},\bmu}\sum_{i=1}^n\tanh(m_i(\bX))m_i(\bX)+\frac{1}{2}\E_{\mathbf{Q},\bmu}\sum_{i=1}^n \mu_i m_i(\bX)\sech^2(\xi_i).
\end{align*}
Now, 
\begin{align*}
\frac{1}{2}\E_{\mathbf{Q},\bmu}\sum_{i=1}^n\tanh(m_i(\bX))m_i(\bX)\ge &\frac{\eta}{2} \E_{\mathbf{Q},\bmu}\sum_{i=1}^nm_i(\bX)^2,
\end{align*}
where
$$\eta:=\inf_{|x|\le 1}\frac{\tanh(x)}{x}>0.$$
The desired conclusion of the lemma follows by noting that
\begin{align*}
\Big|\E_{\mathbf{Q},\bmu}\sum_{i=1}^n \mu_i m_i(\bX)\sech^2(\xi_i)\Big|\leq C\sqrt{n}.
\end{align*}
\end{proof}

\begin{proof}[Proof of Lemma \ref{lem:bayesian}]
We begin with Part (a). By a simple algebra,  the p.m.f. of $\bX$ can  be written as
$$\P_{\theta,\bmu}(\bX=\bx)\propto \text{exp}\left\{\frac{n\theta}{2}\bar{x}^2+\sum_{i=1}^nx_i\mu_i\right\}.$$
Consequently, the joint density of $(\bX,Z_n)$  with respect to the product measure of counting measure on $\{-1,1\}^n$ and Lebesgue measure on $\R$ is proportional to
\begin{align*}
&\text{exp}\left\{\frac{n\theta}{2}\bar{x}^2+\sum_{i=1}^n x_i\mu_i-\frac{n\theta}{2}(z-\bar{x})^2\right\}\\
=&\text{exp}\left\{-\frac{n\theta}{2}z^2+\sum_{i=1}^nx_i(\mu_i+z\theta)\right\}.
\end{align*}
Part (a) follows from the expression above.

Now consider Part (b). Using the joint density of Part (a), the marginal density of $Z_n$ is proportional to
\begin{align*}
&\sum_{{\bf x}\in \{-1,1\}^n}\text{exp}\left\{-\frac{n\theta}{2}z^2+\sum_{i=1}^nx_i(\mu_i+z\theta)\right\}\\
=&\text{exp}\left\{-\frac{n\theta}{2}z^2+\sum_{i=1}^n\log\cosh(\mu_i+z\theta)\right\}=e^{-f_{n,\bmu}(z)},
\end{align*}
thus completing the proof of Part (b).

Finally, consider Part (c). By Part (a)  given $Z_n=z$ the random variables 
$(X_1,\cdots,X_n)$ are independent, with $$\Ptheta(X_i=1|Z_n=z)=\frac{e^{\mu_i+\theta z}}{e^{\mu_i+\theta z}+e^{-\mu_i-\theta z}},$$
and so $$\Etheta(X_i|Z_n=z)=\tanh(\mu_i+\theta z),\quad  {\sf Var}_{\theta,\bmu}(X_i|Z_n=n)=\text{sech}^2(\mu_i+\theta z).$$
Thus for any $\bmu\in [0,\infty)^n$ we have
\begin{align*}
\Etheta\Big(\sum_{i=1}^n (X_i-\tanh(\mu_i+\theta Z_n))\Big)^2=& \Etheta\Etheta\Big\{\Big(\sum_{i=1}^n (X_i-\tanh(\mu_i+\theta Z_n))\Big)^2\Big|Z_n\Big\}\\
=&\E\sum_{i=1}^n\text{sech}^2(\mu_i+\theta Z_n)\le n.
\end{align*}
\end{proof}

\begin{proof}[Proof of Lemma \ref{lem:theta1}]
We begin with Part (a). Since
$$f_{n,\bmu}''(z)=\sum_{i=1}^n \text{tanh}^2( z+\mu_i)$$
is strictly positive for all but at most one $z\in \R$,
the function $z\mapsto f_{n,\bmu}(z)$ is strictly convex with $f_{n,\bmu}(\pm \infty)=\infty$, it follows that $z\mapsto f_{n,\bmu}(z)$ has a unique minima $m_n$ which is the unique root of the equation $f_{n,\bmu}'(z)=0$.  The fact that $m_n$ is positive follows on noting that
$$f_{n,\bmu}'(0)=-\sum_{i=1}^n \tanh(\mu_i)<0, \quad f_{n,\bmu}'(+\infty)=\infty.$$
Also $f_n'(m_n)=0$ gives
$$m_n=\frac{1}{n}\sum_{i=1}^n\tanh(m_n+\mu_i)\le1,$$
and so $m_n\in (0,1]$. Finally, $f_{n,\bmu}'(m_n)=0$ can be written as
\begin{align*}
m_n-\tanh(m_n)=\frac{s}{n}\Big[\tanh( m_n+B)-\tanh( m_n)\Big]\ge C\frac{s}{n}\tanh(B), 
\end{align*}
for some $C>0$, which proves Part (a).

Now consider Part (b). By a Taylor's series expansion around $m_n$ and using the fact that $f_n''(z)$ is strictly increasing on $(0,\infty)$ gives
\begin{align*}
&f_n(z)\ge f_n(m_n)+\frac{1}{2}(z-m_n)^2f_n''(m_n+Kn^{-1/4})\text{ for all }z\in[m_n+K n^{-1/4},\infty)\\
&f_n(z)\le f_n(m_n)+\frac{1}{2}(z-m_n)^2f_n''(m_n+Kn^{-1/4})\text{ for all }z\in [m_n,m_n+K n^{-1/4}].
\end{align*}
 Setting $b_n:= f_n''(m_n+Kn^{-1/4})$ this gives
\begin{align*}
&\Ptheta(Z_n>m_n+K n^{-1/4})\\
=&\frac{\int_{m_n+Kn^{-1/4}}e^{- f_n(z)}dz}{\int_\R e^{- f_n(z)}dz}\\
\le & \frac{\int_{m_n+K n^{-1/4}}^\infty e^{-\frac{b_n}{2}(z-m_n)^2}dz}{\int_{m_n}^{m_n+Kn^{-1/4}} e^{-\frac{b_n}{2}(z-m_n)^2}dz}\\
=&\frac{\P(N(0,1)>K n^{-1/4}\sqrt{b_n})}{\P(0<N(0,1)<K n^{-1/4}\sqrt{b_n})},
\end{align*}
from which the desired conclusion will follow if we can show that $\liminf_{n\rightarrow\infty}n^{-1/2}b_n>0$. But this follows on noting that $$n^{-1/2}b_n=n^{-1/2}f_n''(m_n+Kn^{-1/4}))\ge \sqrt{n}\tanh^2(K n^{-1/4})=K^2\Theta(1).$$

Finally, let us prove Part (c). By a Taylor's series expansion about $\delta m_n$ and using the fact that $f_n(\cdot)$ is convex with unique global minima at $m_n$ we have
$$f_n(z) \ge f_n(m_n)+(z-\delta m_n)f_n'(\delta m_n),\quad \forall z\in(-\infty, \delta m_n].$$
Also, as before we have
$$f_n(z)\le f_n(m_n)+\frac{1}{2}(z-m_n)^2f_n''(m_n),\forall z\in [m_n,2m_n]$$
Thus with $c_n:=f_n''(2m_n)$ for any $\delta>0$ we have
\begin{align}
\notag\Ptheta(Z_n\le \delta m_n)=&\frac{\int_{-\infty}^{\delta m_n}e^{-f_n(z)}dz}{\int_{\R}e^{-f_n(z)}dz}\\
\notag\le &\frac{\int_{-\infty}^{\delta m_n} e^{-(z-\delta m_n)f_n'(\delta m_n)}dz}{\int_{m_n}^{2m_n} e^{-\frac{c_n}{2}(z-m_n)^2}dz}\\
=&\frac{\sqrt{2\pi  c_n}}{|f_n'(\delta m_n)|\P(0<Z<m_n\sqrt{c_n})}.
\label{eq:theta=1}
\end{align}
To bound the the rightmost hand side of \eqref{eq:theta=1}, we claim that the following estimates hold:
\begin{align}
 c_n=&\Theta(nm_n^2)\label{eq:theta=1a},\\
 nm_n^3=&O(|f_n'(\delta m_n)|)\label{eq:theta=1b}.
 \end{align}
 Given these two estimates, we immediately have
 \begin{align}\label{eq:theta=1c}
 m_n\sqrt{c_n}=\Theta(m_n^2\sqrt{n})\ge \Theta(A_n^{2/3}\sqrt{n})\rightarrow \infty,
 \end{align}
 as $A_n\gg n^{-3/4}$ by assumption.
 Thus the rightmost hand side of \eqref{eq:theta=1} can be bounded by
 \begin{align*}
 \frac{m_n\sqrt{n}}{n m_n^3}=\frac{1}{m_n^2\sqrt{n}}\rightarrow 0,
 \end{align*} 
 where the last conclusion uses \eqref{eq:theta=1c}. This completes the proof of Part (c). 
 
 It thus remains to prove the estimates \eqref{eq:theta=1a} and \eqref{eq:theta=1b}. To this effect, note that
 \begin{align*}
 f_n''(2m_n)=&\sum_{i=1}^n\tanh^2(2m_n+\mu_i)\\
\le &\sum_{i=1}^n\Big(\tanh(2m_n)+C_1\tanh(\mu_i)\Big)^2\\
\le & 2n \tanh^2(2m_n)+2C_1^2\sum_{i=1}^n\tanh^2(\mu_i)\\
\lesssim &nm_n^2+nA(\mu_n)\lesssim nm_n^2,
\end{align*}
where the last step uses part (a), and $C_1<\infty$ is a universal constant. This gives the upper bound in \eqref{eq:theta=1a}. For the lower bound of \eqref{eq:theta=1a} we have
\begin{align*}
f_n''(m_n)=\sum_{i=1}^n\tanh^2(2m_n+\mu_i)
\ge n\tanh^2(2m_n)\gtrsim nm_n^3.
\end{align*}
Turning to prove \eqref{eq:theta=1b} we have
\begin{align*}
|f_n'(\delta m_n)|=&\sum_{i=1}^n\tanh(\delta m_n+\mu_i)-n\delta m_n\\
=&\Big[\sum_{i=1}^n\tanh(\delta m_n+\mu_i)-\tanh(\delta m_n)\Big]-n[\delta m_n-\tanh(\delta m_n)]\\
\ge &C_2 nA(\mu_n)-C_3n\delta^3m_n^3\\
\gtrsim &n m_n^3,
\end{align*}
where $\delta$ is chosen small enough, and $C_2>0,C_3<\infty$ are universal constants. This completes the proof of \eqref{eq:theta=1b}, and hence completes the proof of the lemma.
\end{proof}

\end{document}